\newtheorem{theorem}{Theorem}[section]
\newtheorem{proposition}[theorem]{Proposition}
\newtheorem{lemma}[theorem]{Lemma}
\newtheorem{corollary}[theorem]{Corollary}
\newtheorem*{BijThm}{Theorem~\ref{thm:bijections}}
\newtheorem*{EquivBij}{Theorem~\ref{thm:equivbij}}
\theoremstyle{definition}
\newtheorem{definition}[theorem]{Definition}
\newtheorem{openproblem}[theorem]{Open Problem}
\theoremstyle{remark}
\newtheorem{remark}[theorem]{Remark}
\numberwithin{equation}{section}
\newcommand{\pasm}{\mathrm{PASM}}
\newcommand{\blue}[1]{\textcolor{blue}{#1}}
\newcommand*{\defeq}{\mathrel{\vcenter{\baselineskip0.5ex \lineskiplimit0pt
                     \hbox{\scriptsize.}\hbox{\scriptsize.}}}%
                     =}
\newcommand{\bbz}{\mathbb{Z}}
\newcommand{\bbr}{\mathbb{R}}
\author{Dylan Heuer}
\email{heuerd@msoe.edu}
\address{Milwaukee School of Engineering}
\title{Partial Alternating Sign Matrix Bijections and Dynamics}
\keywords{alternating sign matrix; bijection, sign matrix, partial alternating sign matrix}
\subjclass[2010]{05A05, 52B05}
\begin{document}
\begin{abstract}
We investigate analogues of alternating sign matrices, called partial alternating sign matrices. We prove bijections between these matrices and several other combinatorial objects. We use an analogue of Wieland's gyration on fully-packed loops, which we relate to the study of toggles and order ideals. Finally, we show that rowmotion on order ideals of a certain poset and gyration on partial fully-packed loop configurations are in equivariant bijection.
\end{abstract}

\maketitle

\section{Introduction}
\label{sec:intro}
Alternating sign matrices have a very interesting history, enumeration, and connection to other areas of mathematics and science \cite{Bressoud}. There exist beautiful bijections between alternating sign matrices and many other combinatorial objects, including \emph{monotone triangles}, \emph{height-function matrices}, \emph{fully-packed loop configurations}, \emph{square ice configurations} and \emph{order ideals} of a particular poset. The bijection with square ice configurations is especially useful, as it has revealed connections to physics which played a key role in one of the proofs of the enumeration of alternating sign matrices \cite{Kuperberg}.

In this paper, we study an analogue of alternating sign matrices, which we call \emph{partial alternating sign matrices}. We allow these matrices to be rectangular, and in doing so relax the condition that each row and column must sum to $1$. These matrices appear as the individual components of \emph{chained alternating sign matrices} (see \cite{Chained}), which was the original motivation for studying them. We hope this study of partial alternating sign matrices will prove useful for future research in the chained setting. In Section~\ref{sec:prelim} we provide a summary of known definitions and results. These may be useful to help understand the main results, found in Sections \ref{sec:pasm_bij} and \ref{sec:pasm_dyn}, where we study rectangular analogues of alternating sign matrices. In Section~\ref{sec:pasm_bij}, we explore several bijections, which in most cases are analogues of those studied in the usual alternating sign matrix setting (see \cite{Propp, Striker1}). These bijections culminate in our main result of the section, which is the following theorem. See Figure~\ref{allbij} for an example of each of the objects in the statement of the theorem.

\begin{BijThm}
There are explicit bijections between $m \times n$ partial alternating sign matrices, $(m,n)$-partial monotone triangles, $(m,n)$-partial height-function matrices, $(m,n)$-partial fully-packed loop configurations, $(m,n)$-rectangular ice configurations, the set of order ideals of $\textbf{P}_{m,n}$, and $(m,n)$-nests of osculating paths.
\end{BijThm}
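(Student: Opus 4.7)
The plan is to use the partial alternating sign matrices (PASMs) as a central hub and construct an explicit bijection from PASMs to each of the other six families, then argue each is invertible. Since the theorem asserts \emph{explicit} bijections, the proof will be constructive, built as a sequence of lemmas (one per family) of the form ``the map $\phi: \pasm_{m,n} \to X$ defined by the following local rule is a bijection.''

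First I would introduce the PASM-to-partial-monotone-triangle map via column-partial-sums: reading the $j$-th column of the PASM from top to bottom and recording the positions at which the running column sum changes value produces a triangular array whose row lengths depend on where column sums are $0$ or $1$; the partial monotone triangle conditions come directly from the $\pm 1$ alternation and partial row/column sums of a PASM. The inverse reads off the matrix entries from consecutive rows of the triangle. Next, the PASM-to-partial-height-function-matrix bijection is built by two-dimensional partial summation: starting from a corner and summing matrix entries in the northwest-rectangle sense gives the height values, while the alternating sign and partial row/column conditions translate into the required height-step conditions. For the PFPL bijection I would pass through the height function, exactly as in the square ASM case of \cite{Propp}: adjacent height cells differing by $\pm 1$ encode the presence or absence of loop edges. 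The bijection with rectangular ice configurations is obtained by interpreting each matrix cell as a vertex type in the standard six-vertex dictionary (nonzero cells become the two ``alternating'' vertex types and $0$ cells become one of four straight/turn vertices determined by the local partial row/column sums). For the order ideal bijection I would identify each antichain element of $\mathbf{P}_{m,n}$ with a specific height-function level set, so that the PASM to height-function map composed with this identification yields the desired bijection with $J(\mathbf{P}_{m,n})$. Finally, the bijection to nests of osculating paths comes from reading $+1$ entries as NE-kinks and $-1$ entries as SW-kinks of paths whose endpoints are determined by the column and row partial sums on the boundary.

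The main obstacle, and the place where the proof genuinely differs from the classical ASM setting, is the boundary data. In the square ASM case the row/column sums are all forced to equal $1$, which pins down the boundary of each auxiliary object (the top and bottom row of the monotone triangle, the border of the height-function matrix, the external half-edges of the FPL, etc.). For PASMs the row and column sums are only required to lie in $\{0,1\}$, so the boundary data becomes variable and must be incorporated as part of the codomain's definition. In each bijection I would need to check that the allowed boundary configurations on the two sides match exactly: for instance, that a partial monotone triangle's first and last rows range over precisely the same subsets of $\{1,\dots,n\}$ as the sets of columns of a PASM whose column sum is $1$, and similarly for the other families.

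Once the boundary accounting is set up correctly, the verification that each map preserves the defining local conditions, and that the stated inverse really is the inverse, is a routine check that essentially parallels the arguments given in \cite{Propp, Striker1} for the usual ASM case, restricted cell-by-cell. I would present these bijections in an order that allows most maps to be composed from previously established ones (PASM $\to$ height-function matrix $\to$ FPL $\to$ ice, and PASM $\to$ height-function matrix $\to$ order ideal), so that the bulk of the verification only needs to be carried out for the two foundational bijections (monotone triangles and height-function matrices) plus the osculating path bijection, which I would handle directly from the PASM entries.
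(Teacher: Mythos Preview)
Your proposal is correct and follows essentially the same architecture as the paper: a chain of lemmas with the height-function matrix as the key intermediate (PASM $\leftrightarrow$ partial monotone triangle via column partial sums; PASM $\leftrightarrow$ corner-sum matrix $\leftrightarrow$ height function $\leftrightarrow$ PFPL $\leftrightarrow$ rectangular ice; height function $\leftrightarrow$ order ideal; PASM $\leftrightarrow$ osculating paths). The one place your plan diverges from the paper is the boundary handling you flag as the main obstacle: rather than tracking variable boundary data, the paper simply imposes \emph{fixed} boundary conditions on the top and left (e.g., $h_{0,k}=k$, $h_{\ell,0}=\ell$ for height functions; edges $v_{0,j}v_{1,j}$ for $j$ odd and $v_{i,0}v_{i,1}$ for $i$ even for PFPLs) and imposes \emph{no} condition at all on the right and bottom, so the freedom in PASM row/column sums is absorbed by the unconstrained sides and there is no boundary matching to verify---similarly, partial monotone triangles have fixed row lengths padded with zeros rather than variable row lengths.
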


\begin{figure}[hbtp]
\centering
\[
\left(\begin{array}{rrrr}
1  & 0 & 0  & 0\\
0  & 0 & 1  & 0\\
-1 & 1 & 0  & 0\\
1  & 0 & -1 & 1
\end{array}\right)
\hspace{.2in}
\begin{array}{ccccccc}
  &   &   & 1 &   &   &  \\
  &   & 1 &   & 3 &   &  \\
  & 0 &   & 2 &   & 3 &  \\
0 &   & 1 &   & 2 &   & 4
\end{array}
\hspace{.2in}
\begin{pmatrix}
0 & 1 & 2 & 3 & 4\\
1 & 2 & 3 & 4 & 3\\
2 & 3 & 2 & 3 & 2\\
3 & 4 & 3 & 2 & 3\\
4 & 3 & 4 & 3 & 2
\end{pmatrix}
\]
\includegraphics[width=2in]{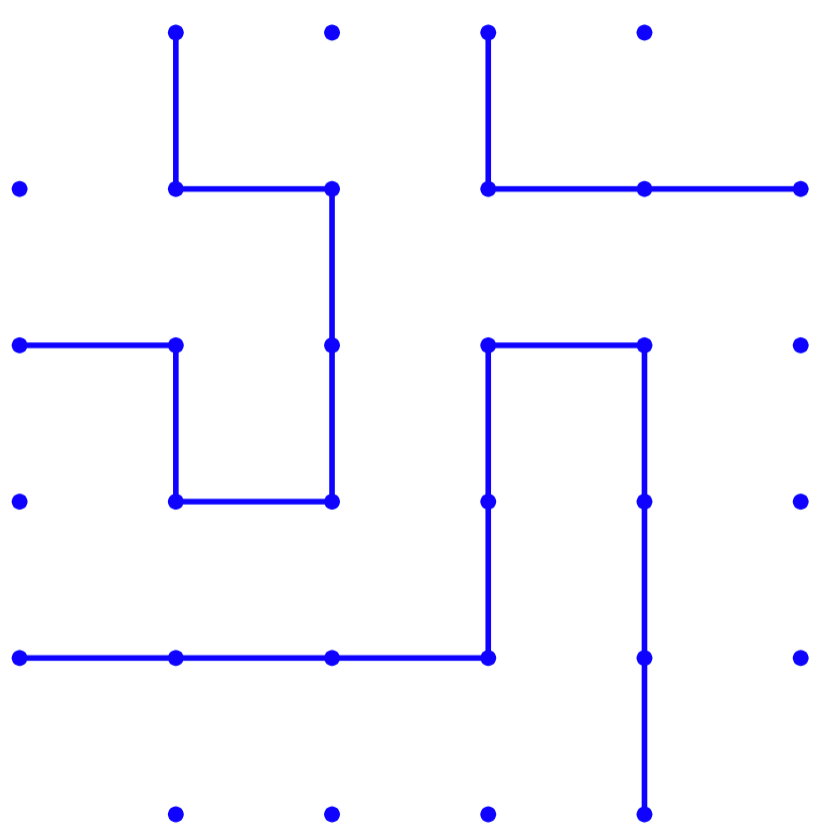} \hspace{0.25in} \includegraphics[width=2in]{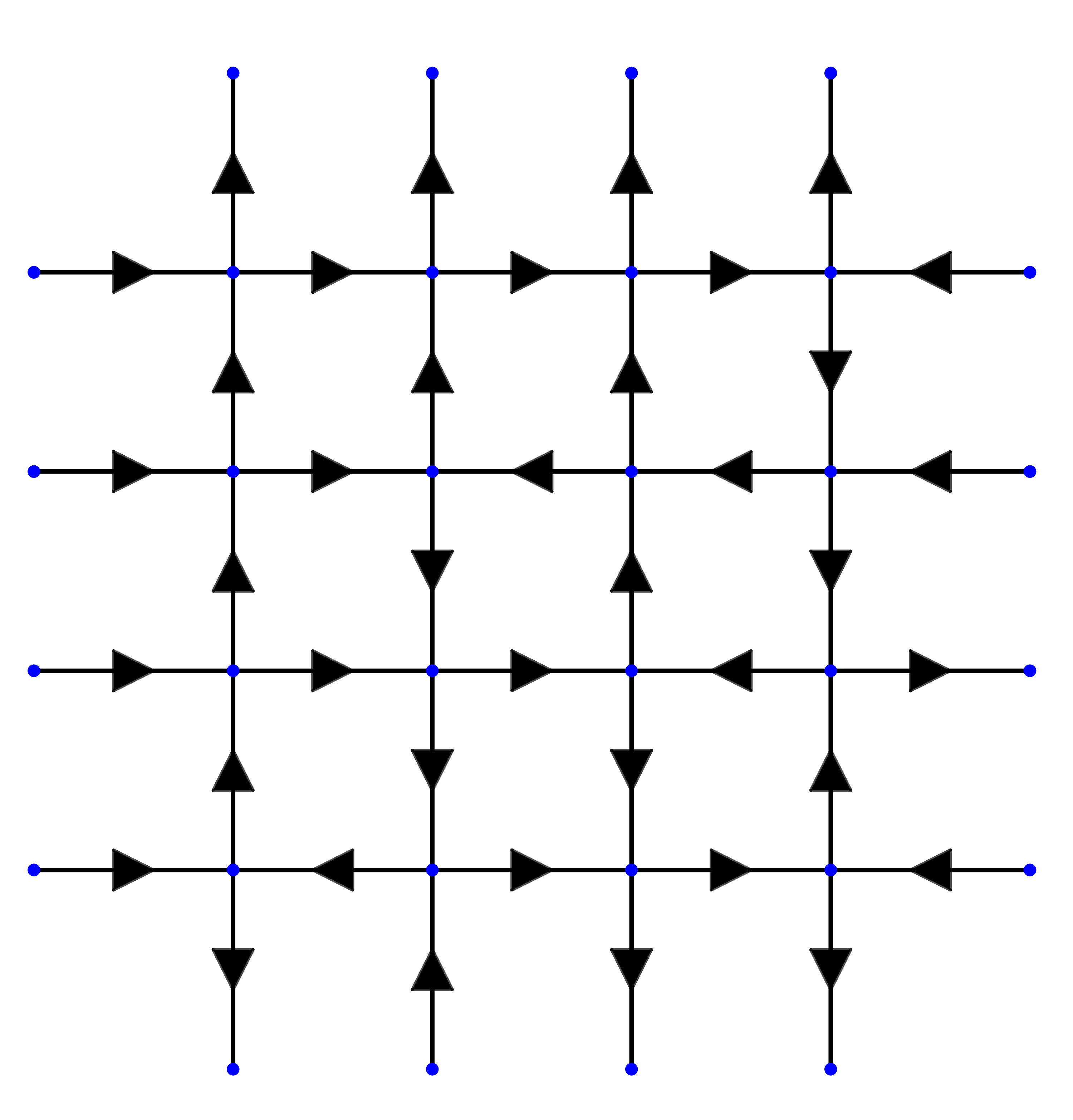}

\vspace{0.25in}

\includegraphics[width=2in]{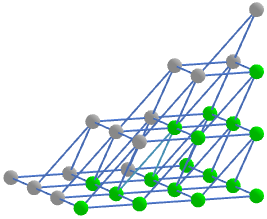} \hspace{0.25in} \includegraphics[width=2in]{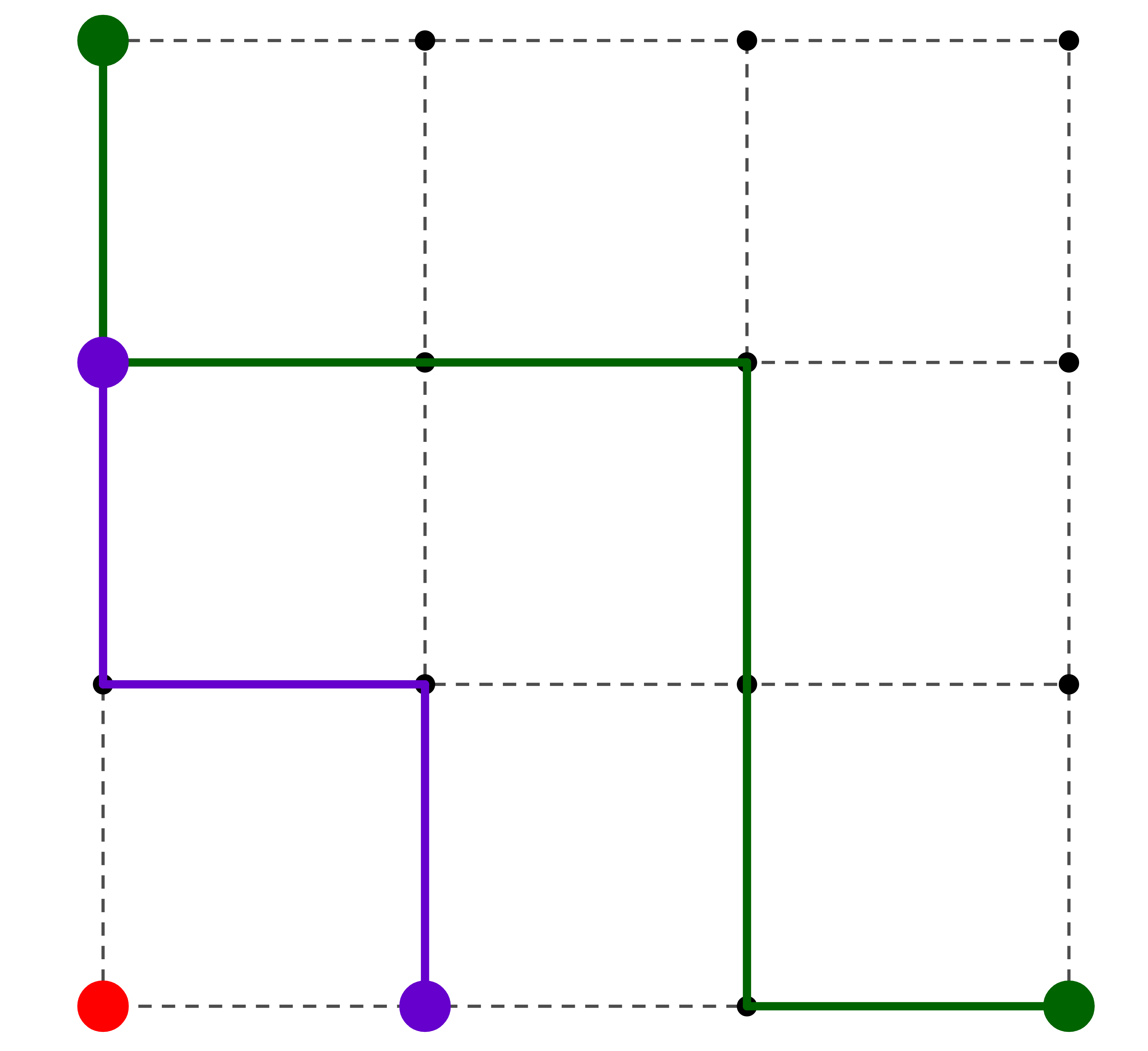}
\caption{From left to right, top to bottom: a $4 \times 4$ partial alternating sign matrix, along with its corresponding partial monotone triangle, partial height-function matrix, partial fully-packed loop configuration, rectangular ice configuration, order ideal, and nest of osculating paths.}
\label{allbij}
\end{figure}
Such bijections among combinatorial objects provide new angles from which to study them. For example, after these bijections, in Section~\ref{sec:pasm_dyn}, we apply an analogue of Wieland's gyration action \cite{Wieland} on fully-packed loop configurations, and following \cite{Striker1}, we relate this to the study of toggles and order ideals. We obtain the following theorem, showing that rowmotion on order ideals of a certain poset and gyration on partial fully-packed loop configurations have the same orbit structure.

\begin{EquivBij}
$J\left(\textbf{P}_{m,n}\right)$ under \text{Row} and $(m,n)$-partial fully-packed loop configurations under gyration are in equivariant bijection.
\end{EquivBij}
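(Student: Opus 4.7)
My plan is to establish the equivariant bijection by working in the toggle group of $\textbf{P}_{m,n}$ and matching the gyration action on partial fully-packed loop configurations with a specific conjugate of rowmotion, following the blueprint developed in \cite{Striker1} for the square ASM case.

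First, I would use the bijection between $J(\textbf{P}_{m,n})$ and $(m,n)$-partial fully-packed loop configurations provided by Theorem~\ref{thm:bijections} to transport the standard poset toggles $t_p$ (for $p \in \textbf{P}_{m,n}$) to involutions on partial FPL configurations. I expect each poset element to correspond naturally to an interior vertex of the FPL grid, and the transported involution $t_p$ to act as a local move that swaps the two admissible completions of the FPL at that vertex while fixing the rest of the configuration --- precisely the kind of local move underlying Wieland's original definition of gyration.

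Next, I would decompose gyration into these local toggles. Two-color the vertices of the grid in checkerboard fashion and show that gyration equals the composition that first toggles every vertex of one color and then toggles every vertex of the other color. Verifying that each color-class product is well-defined on $(m,n)$-partial FPL configurations is the principal technical check in the partial setting, since the boundary conditions restrict which local moves are available at peripheral vertices; I expect these restrictions to correspond precisely to the non-existence of the corresponding elements in $\textbf{P}_{m,n}$, so that peripheral vertices contribute trivially to the product.

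Finally, I would invoke the standard toggle-group result that rowmotion on $J(P)$ equals $t_{p_N} \cdots t_{p_1}$ for any linear extension $p_1, \ldots, p_N$ of $P$, and that any reordering of this product in which antichains of $P$ are toggled consecutively yields a conjugate element of the toggle group, hence an action with the same orbit structure. Since each checkerboard color class of interior vertices of the FPL grid should correspond to an antichain of $\textbf{P}_{m,n}$, the two-step product describing gyration will be conjugate to rowmotion, which proves the theorem. The main obstacle I anticipate is verifying cleanly that the checkerboard color classes really are antichains of $\textbf{P}_{m,n}$ and that the transported toggles commute within each class; this is where the explicit combinatorial structure of $\textbf{P}_{m,n}$ and the boundary bookkeeping of the partial setting must be carefully set up.
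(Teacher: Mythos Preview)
Your overall strategy matches the paper's: identify gyration on partial FPLs with a toggle-group element on $J(\textbf{P}_{m,n})$, then cite the conjugacy of that element with rowmotion. The paper does exactly this via Lemma~\ref{equiv} (local move $\leftrightarrow$ toggle), Lemma~\ref{gyr-poset-fpl} (gyration $=$ $\mathrm{Gyr}^{\pm 1}$), and Lemma~\ref{equiv-gyr-row} ($\mathrm{Gyr}$ conjugate to $\mathrm{Row}$, quoted from \cite{Striker1}).

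However, two of your intermediate claims are wrong as stated and would need repair. First, the poset $\textbf{P}_{m,n}$ is three-dimensional, not two-dimensional: a square at position $(i,j)$ in the FPL grid does \emph{not} correspond to a single poset element but to the entire fiber $S_{i,j}=\{(i,j,k):(i,j,k)\in\textbf{P}_{m,n}\}$, and the local move at that square corresponds to toggling all of $S_{i,j}$ (equivalently, to changing the height-function entry $h_{i,j}$ by $\pm 2$). So transporting a single toggle $t_p$ to the FPL side does not give a clean local move; you must work with fiber toggles from the start, as in Lemma~\ref{equiv}. Second, the checkerboard color classes are \emph{not} antichains of $\textbf{P}_{m,n}$: since $\mathrm{rank}(i,j,k)=m+n-2-i-j+2k$, a color class consists of all elements of a fixed rank parity, and such elements can certainly be comparable (e.g.\ $(m-1,n-1,0)<(m-3,n-1,0)$, both of even rank). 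What is true, and what you actually need, is that no two elements of the same rank parity are in a covering relation, so their toggles commute; this is what makes $\mathrm{Gyr}$ well-defined. Correspondingly, the result you should invoke at the end is not a general ``antichain-reordering conjugacy'' (which is not a standard theorem) but the specific fact that toggling even ranks then odd ranks is conjugate to rowmotion in any finite ranked poset, i.e.\ Lemma~\ref{equiv-gyr-row}.
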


\section{Preliminaries}
\label{sec:prelim}
In this section we give known definitions and results which may help the reader to understand the main results in Sections~\ref{sec:pasm_bij} and \ref{sec:pasm_dyn}. As mentioned in the introduction, there exists many bijections between alternating sign matrices and other combinatorial objects. We will include a description of the bijection with monotone triangles in this section. See \cite{Propp} and \cite{Striker1} for details on the other bijections. Generalizations of these bijections are given in Section~\ref{sec:pasm_bij}. We begin with the definition of an alternating sign matrix.

\begin{definition}
An \emph{alternating sign matrix of order $n$} is an $n \times n$ matrix with entries in $\left\{-1, 0, 1 \right\}$ whose rows and columns sum to 1 and whose entries alternate in sign across each row and column. We will denote the set of all alternating sign matrices of order $n$ by $A_n$.
\end{definition}

Alternating sign matrices have a beautiful counting formula: $$ \left| A_n \right| = \prod_{j=0}^{n-1} \frac{(3j+1)!}{(n+j)!}.$$ This formula was conjectured in 1983 \cite{MRRASM}, and proved many years later \cite{Zeilberger, Kuperberg}.

We now give the definition of a monotone triangle and describe the bijection between monotone triangles and alternating sign matrices.

\begin{definition}
A \emph{monotone triangle of order $n$} is a triangular array of positive integers \\ $\left(a_{i,j}\right)_{1\leq i \leq j \leq n}$ taken from the set ${1,2,\ldots,n}$ with the following properties:

\begin{itemize}

\item rows are strictly increasing: $a_{i,j} < a_{i,j+1}$, and

\item diagonals are weakly increasing: $a_{i,j} \leq a_{i-1,j}$ and $a_{i,j} \leq a_{i+1,j+1}$.

\end{itemize}

\end{definition}

Given an alternating sign matrix, first construct its matrix of partial column sums. Then create the corresponding monotone triangle whose $i$th row consists of the values $j$ for which entry $(i,j)$ of this partial sum matrix is 1. An example is given in Figure~\ref{fig:asm}. 

\begin{figure}[hbt]
\centering
$\begin{pmatrix}
0 & 1 & 0 & 0 \\
1 & -1 & 0 & 1 \\
0 & 0 & 1 & 0 \\
0 & 1 & 0 & 0
\end{pmatrix}
\longleftrightarrow
\begin{pmatrix}
0 & 1 & 0 & 0 \\
1 & 0 & 0 & 1 \\
1 & 0 & 1 & 1 \\
1 & 1 & 1 & 1
\end{pmatrix}
\longleftrightarrow
\begin{array}{ccccccc}
  &   &   & 2 &   &   &\\
  &   & 1 &   & 4 &   & \\
  & 1 &   & 3 &   & 4 & \\
1 &   & 2 &   & 3 &   & 4
\end{array}$
\caption{A $4 \times 4$ alternating sign matrix along with its matrix of partial column sums and its corresponding monotone triangle.}
\label{fig:asm}
\end{figure}

Another of the objects alternating sign matrices are in bijection with are fully-packed loop configurations (or just fully-packed loops), which we define below. In order to do so, we must first define a certain graph.

\begin{definition}\label{gridmn}
Define the graph $G_{m,n}$ as follows. The vertex set is: $$V_{m,n} \defeq \left\{v_{i,j} \; : \; 0 \leq i \leq m+1, \; 0 \leq j \leq n+1 \right\} - \left\{v_{0,0}, v_{0, n+1}, v_{m+1, 0}, v_{m+1, n+1}\right\}.$$ We say the \emph{internal vertices} are $\left\{v_{i,j} \; : \; 1 \leq i \leq m, \; 1 \leq j \leq n\right\}$, and the remaining vertices are \emph{boundary vertices}. We also say a vertex $v_{i,j}$ is \emph{even} (resp. \emph{odd}) if $i+j$ is even (resp. odd). The edge set is: \[E_{m,n} \defeq \begin{cases} v_{i,j}v_{i+1,j} & 0 \leq i \leq m, 1 \leq j \leq n\\ v_{i,j}v_{i,j+1} & 1 \leq i \leq m, 0 \leq j \leq n. \end{cases}\]
\end{definition}

\noindent See Figure~\ref{fig:gmn} for a visual example of $G_{m,n}$.

\begin{definition}
\label{FPL}
A \emph{fully-packed loop configuration} of order $n$ is a subgraph of $G_{n,n}$ such that each interior vertex has exactly two incident edges and the following boundary conditions are met. When $n$ is odd, edges include $v_{0,j}v_{1,j}$ and $v_{n,j}v_{n+1,j}$ for $j$ odd, as well as $v_{i,0}v_{i,1}$ and $v_{i,n}v_{i,n+1}$ for $i$ even. When $n$ is even, edges include $v_{0,j}v_{1,j}$ for $j$ odd, $v_{n,j}v_{n+1,j}$ for $j$ even, $v_{i,0}v_{i,1}$ for $i$ even, and $v_{i,n}v_{i,n+1}$ for $i$ odd.
\end{definition} 
\noindent See Figure~\ref{fplfig} for examples of fully-packed loops.

\begin{figure}[hbt]
\centering
\includegraphics[scale=0.25]{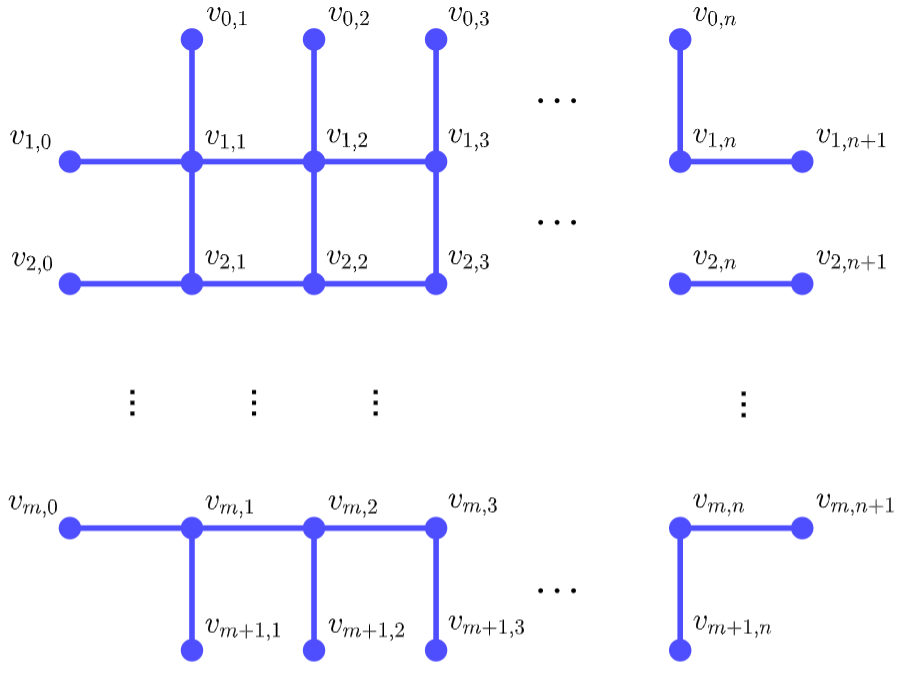}
\caption{The graph $G_{m,n}$.}
\label{fig:gmn}
\end{figure}

\begin{figure}[hbt]
\includegraphics[scale=0.4]{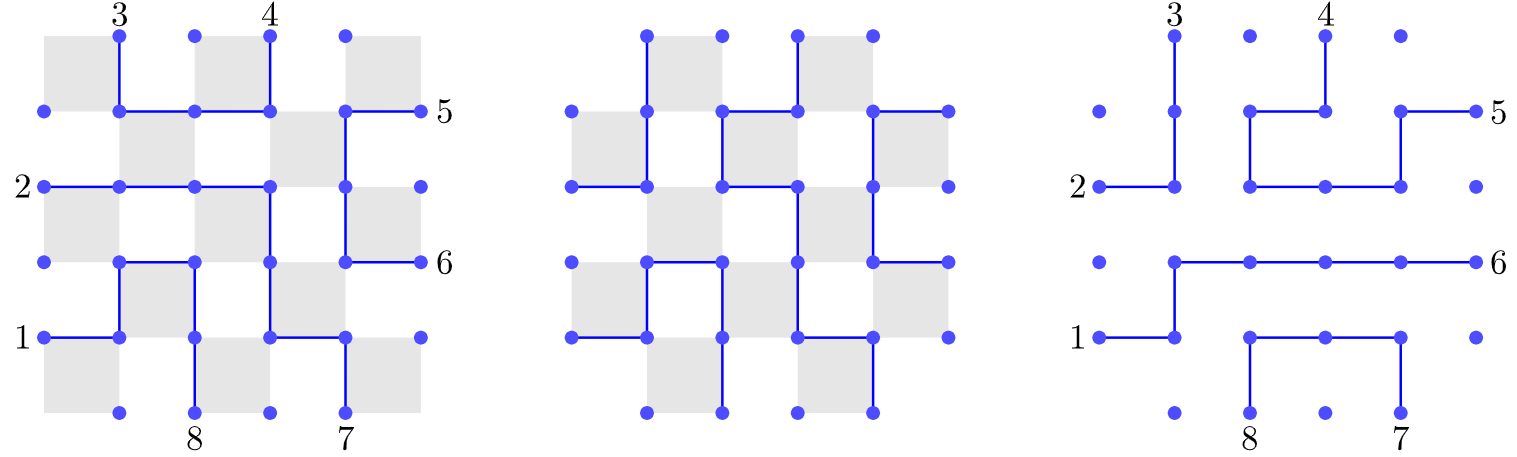}
\caption{The gyration action on a fully-packed loop configuration in $G_{4,4}$, first performing the local action on even squares (shaded on the left) and then odd squares (shaded in the middle). The initial and final fully-packed loops have labeled boundary edges for constructing their link patterns.}
\label{fplfig}
\end{figure}

We now introduce an action on fully-packed loops which has some very nice properties \cite{Wieland}. First, define a local action on a square of a  fully-packed loop as follows. If its  only edges are a pair of parallel lines, swap them to a pair of parallel lines in the other orientation. Otherwise, do nothing.

Also assign a parity to each square, starting with even in the top left corner, and alternating between even and odd for adjacent squares. Then define an action on a fully-packed loop as applying this local move first to all of the even squares, and then all of the odd squares. This produces another fully-packed loop since the local action does not change the degree of any interior vertex and does not affect the boundary conditions. Call this action \emph{gyration}. An example of this action can be seen in Figure~\ref{fplfig}.

One can label the vertices along the boundary of a fully-packed loop where the edges ``exit'' the graph with the numbers $\{1,\ldots,2n\}$. Then each number is connected to one other by a path, and each fully-packed loop can be reduced to a \emph{non-crossing matching}, meaning that if the numbers are arranged in a circle and connected with arcs, none of the arcs cross. This is called the \emph{link pattern} of a fully-packed loop. Wieland showed the following theorem \cite{Wieland}.

\begin{theorem}[\protect{\cite{Wieland}}]
\label{rotate}
Gyration on a fully-packed loop rotates the corresponding link pattern.  Specifically, if $i$ and $j$ are connected in a link pattern, then in the corresponding link pattern after gyration is applied, $i-1$ and $j-1$ (mod $2n$) will be connected.
\end{theorem}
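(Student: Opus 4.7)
The plan is to follow Wieland's original argument, which rests on thinking of gyration in terms of how the individual paths of a fully-packed loop get rerouted by each local move. First I would record the basic structural fact that, since every interior vertex of $G_{n,n}$ has degree exactly two in a fully-packed loop $F$, the edges of $F$ partition into disjoint simple paths joining pairs of boundary exits and disjoint simple cycles. Thus the link pattern is well-defined, and to prove the theorem it suffices to track what happens to the endpoints of each path under gyration.

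Next I would analyze a single local move on an even square. If the square contains a pair of parallel edges, then locally two path-segments enter and leave the square along those parallels, and swapping to the other pair of parallels splices those two segments in the opposite way; outside the square, every remaining edge of $F$ is unchanged. The main conceptual lemma is then: after performing all the even local moves and then all the odd local moves, whenever boundary exits $i$ and $j$ were connected by a path in $F$, the exits $i-1$ and $j-1 \pmod{2n}$ are connected by a path in the new configuration. To prove this lemma I would fix a path $P$ from $i$ to $j$ in $F$ and walk along $P$ square by square, classifying each square as even or odd and by the local shape of $P$ inside it (straight-through, corner, or part of a parallel pair); the parity-ordered application of local moves forces the local rerouting choices to propagate coherently along $P$, so that the new path-endpoints are precisely the boundary exits immediately counterclockwise of $i$ and of $j$. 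The boundary conventions in Definition~\ref{FPL} make this counterclockwise shift agree with $i \mapsto i-1 \pmod{2n}$ on the labels.

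The main obstacle I expect is the bookkeeping in that propagation argument: one must argue that the local trades along $P$ really do compose to a single cyclic shift of its two endpoints, rather than to some more complicated permutation, and that cycles of $F$ (which, being boundary-free, are irrelevant to the link pattern) do not interfere with the paths. A clean way to handle both issues is to introduce an auxiliary \emph{extended} configuration obtained by adjoining extra edges along the boundary of $G_{n,n}$ that close up the link pattern into a union of closed curves, chosen so that gyration on $F$ becomes literally a rigid half-step rotation of the extended object. Under such a correspondence, the rotation of the link pattern is immediate, and the bulk of the remaining work is verifying that (i) the extension is well-defined for every $F$, and (ii) the two descriptions of gyration — as parity-ordered local swaps on $F$, and as a rotation of the extension — actually agree. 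With that equivalence in hand, Theorem~\ref{rotate} follows directly.
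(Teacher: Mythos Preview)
The paper does not contain a proof of this theorem. Theorem~\ref{rotate} is stated with attribution to Wieland~\cite{Wieland} and is used as a black box (notably in the proof of the partial-rotation corollary in Section~\ref{sec:pasm_dyn}); the author makes no attempt to reprove it. So there is no ``paper's own proof'' to compare your proposal against.

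That said, a brief comment on your sketch relative to Wieland's actual argument: the first half of your plan --- tracking a path $P$ square by square through the parity-ordered local moves --- is notoriously hard to make rigorous directly, because a single local swap can splice \emph{two different} paths together or split one path into a path and a cycle, so ``the new path-endpoints are precisely the boundary exits immediately counterclockwise of $i$ and of $j$'' is not something you can verify locally along $P$. You seem to recognize this, since your second half abandons the path-following approach in favor of an auxiliary extended object on which gyration becomes a literal rotation. That second idea is the right one and is close in spirit to what Wieland does: he works with the pair $(F, F^c)$ where $F^c$ is the complementary edge set (also a fully-packed loop, with the boundary exits shifted by one), and observes that the even-square step of gyration turns $F$ into the complement of a configuration whose link pattern is that of $F$ shifted; composing with the odd-square step gives the full rotation. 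Your ``extended configuration'' is left unspecified, and the two verification steps you list at the end are where all the content lies --- as written, this is a plan that gestures at the correct mechanism but does not yet supply it.
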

\noindent See Figure~\ref{rotation} for an example of this rotation.

\begin{figure}[hbtp]
\centering
\includegraphics[scale=0.6]{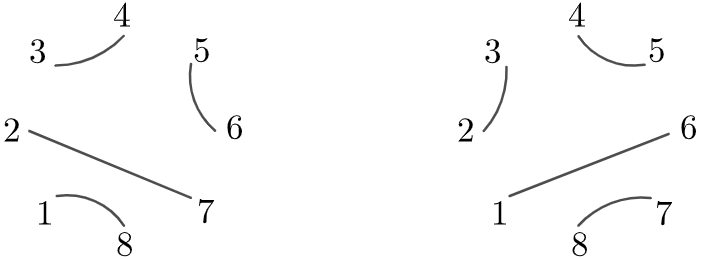}
\caption{The link patterns for the fully-packed loops from Figure~\protect{\ref{fplfig}}.}
\label{rotation}
\end{figure}

Some combinatorial objects behave particularly nicely with respect to various actions --- notably those which break up the set of objects into \emph{orbits}, such as when the action is bijective and the set is finite. Given a set of objects and such an action, it is especially noteworthy when the \emph{order} of the action is predictable and when the action exhibits special properties, such as the \emph{cyclic sieving phenomenon} \cite{CSP} or \emph{homomesy phenomenon} \cite{ProppRoby}. For more information on the background and an overview of some results in this area called \emph{dynamical algebraic combinatorics}, see \cite{StrikerNotices}. For more detailed information on results related to homomesy, see, for example \cite{Roby}.

While much could be said about this topic, we now provide a few relevant definitions which will help the reader understand the results of Section~\ref{sec:pasm_dyn}.

\begin{definition}
\label{def:poset}
A \emph{partially ordered set}, or \emph{poset}, is a set $P$ with a binary relation $\leq$ which is reflexive, antisymmetric, and transitive. We say $y$ \emph{covers} $x$ in $P$ (or $x$ is covered by $y$) if $x<y$ and no element $z$ exists in $P$ such that $x<z<y$. A poset is often represented by its \emph{Hasse diagram}, a graph whose vertices are the elements of $P$ and whose edges are the covering relations; if $x<y$, then $y$ is drawn above $x$ in the Hasse diagram.
\end{definition}

For more information about partially ordered sets, and for additional relevant terminology, see Chapter 3 of \cite{stanley1}.

\begin{definition}
\label{def:oi}
Let $P$ be a poset. An $\emph{order ideal}$ of $P$ is a subset $X \subseteq P$ such that if $y \in X$ and $z \leq y$, then $z \in X$. The set of order ideals of $P$ is denoted $J(P)$.
\end{definition}
\noindent $J(P)$, ordered by inclusion, is itself a poset. In fact it is a \emph{distributive lattice} (see Section 3.4 of \cite{stanley1}).

We now define an action on order ideals of poset. This action was first introduced for hypergraphs in 1974 by Duchet \cite{Duchet} and generalized to general posets by Brouwer and Schrijver in 1975 by Brouwer and Schrijver \cite{brouwer-schrijver}. It was studied in detail by Cameron and Fon-Der-Flaass for two chains and three chains \cite{fonder1, Cameron-Flaass}, and studied later by many others, including Striker and Williams \cite{ProRow}. This action continues to be of intense current interest.

\begin{definition}
\label{def:toggle}
Let $P$ be a poset, and let $X \in J(P)$. For any $q \in P$, the \emph{toggle} $t_q: J(P)\rightarrow J(P)$ is defined as follows:
$$t_q(X) = \begin{cases}
X \cup \{q\} & \text{if } q\notin X \text{ and } X \cup \{q\} \in J(P) \\
X - \{q\} & \text{if } q \in X \text{ and } X - \{q\} \in J(P) \\
X & \text{otherwise.}
\end{cases}$$
The \emph{toggle group} of $P$, denoted $T(P)$, is the group generated by the $t_q$ for all $q \in P$, with operation composition.
\end{definition}

In other words, if an element is not in the order ideal, and adding it in would result in another order ideal, then toggling the element adds it in. If an element is in the order ideal, and removing it would result in another order ideal, then toggling the element removes it. Otherwise, toggling the element does nothing. Notice that toggles are involutions, and that toggles commute whenever there are no covering relations between the elements \cite[p.546]{Cameron-Flaass}.

\begin{definition}
Let $P$ be a poset, and let $X \in J(P)$. Then \emph{rowmotion}, $\text{Row}(X)$, is the order ideal generated by the minimal elements of $P-X$.
\end{definition}

In \cite{Cameron-Flaass}, it was shown that rowmotion on $X$ is the order ideal obtained from $X$ by toggling the elements of $P$ from top to bottom (e.g., toggling by rows). The order of rowmotion on order ideals of particular posets, such as \emph{root posets} and \emph{minuscule posets} exhibits very nice properties, such as having predictable order, cyclic sieving, and homomesy. In some cases, rowmotion corresponds to another action, called promotion, on \emph{tableaux} or tableaux-like objects \cite{DilksPechenikStriker, DilksStrikerVorland, ProRow}. Interpreting actions as toggle group actions on order ideals of certain posets can be advantageous. There may be results or properties of the original action that reveal something new about the posets, or conversely, known results of posets may be used to prove something about the original action.

\section{Partial Alternating Sign Matrix Bijections}
\label{sec:pasm_bij}
In this section, we first define \emph{partial alternating sign matrices}. We then describe several bijections akin to those in the usual alternating sign matrix setting (see, for example \cite{Propp, Striker1}). These bijections are analogous to those in the usual alternating sign matrix setting, but we include proofs for each of them for completeness and clarity.

\begin{definition}
\label{pasm}
An \emph{$m \times n$ partial alternating sign matrix} is an $m \times n$ matrix $M = \left(M_{ij}\right)$ with entries in $\left\{-1,0,1\right\}$ such that:
\begin{itemize}
\item the entries in each row and column sum to either 0 or 1,
\item the nonzero entries in each row and column alternate in sign, and
\item the first nonzero entry (if any) in each column and last nonzero entry (if any) in each row are~1.
\end{itemize}
We denote the set of all $m \times n$ partial alternating sign matrices as $\pasm_{m,n}$. 

$\pasm_{2,2}$ consists of the following eight matrices:
\[\left(\begin{array}{rr} 0 & 0 \\ 0 & 0 \end{array}\right), \left(\begin{array}{rr} 1 & 0 \\ 0 & 0 \end{array}\right), \left(\begin{array}{rr} 0 & 1 \\ 0 & 0 \end{array}\right), \left(\begin{array}{rr} 0 & 0 \\ 1 & 0 \end{array}\right), \left(\begin{array}{rr} 0 & 0 \\ 0 & 1 \end{array}\right), \left(\begin{array}{rr} 1 & 0 \\ 0 & 1 \end{array}\right), \left(\begin{array}{rr} 0 & 1 \\ 1 & 0 \end{array}\right), \text{ and } \left(\begin{array}{rr} 1 & 0 \\ -1 & 1 \end{array}\right).\]

$\pasm_{2,3}$ consists of the following seventeen matricies:
\[ \left(\begin{array}{rrr} 0 & 0 & 0 \\ 0 & 0 & 0 \end{array}\right), \left(\begin{array}{rrr} 1 & 0 & 0 \\ 0 & 0 & 0 \end{array}\right), \left(\begin{array}{rrr} 0 & 1 & 0 \\ 0 & 0 & 0 \end{array}\right), \left(\begin{array}{rrr} 0 & 0 & 1 \\ 0 & 0 & 0 \end{array}\right), \left(\begin{array}{rrr} 0 & 0 & 0 \\ 1 & 0 & 0 \end{array}\right), \left(\begin{array}{rrr} 0 & 0 & 0 \\ 0 & 1 & 0 \end{array}\right),\]
\[\left(\begin{array}{rrr} 0 & 0 & 0 \\ 0 & 0 & 1 \end{array}\right), \left(\begin{array}{rrr} 1 & 0 & 0 \\ 0 & 1 & 0 \end{array}\right), \left(\begin{array}{rrr} 1 & 0 & 0 \\ 0 & 0 & 1 \end{array}\right), \left(\begin{array}{rrr} 0 & 1 & 0 \\ 1 & 0 & 0 \end{array}\right), \left(\begin{array}{rrr} 0 & 1 & 0 \\ 0 & 0 & 1 \end{array}\right), \left(\begin{array}{rrr} 0 & 0 & 1 \\ 1 & 0 & 0 \end{array}\right),\]
\[\left(\begin{array}{rrr} 0 & 0 & 1 \\ 0 & 1 & 0 \end{array}\right), \left(\begin{array}{rrr} 1 & 0 & 0 \\ -1 & 1 & 0 \end{array}\right) \left(\begin{array}{rrr} 1 & 0 & 0 \\ -1 & 0 & 1 \end{array}\right), \left(\begin{array}{rrr} 0 & 1 & 0 \\ 0 & -1 & 1 \end{array}\right), \text{ and } \left(\begin{array}{rrr} 0 & 1 & 0 \\ 1 & -1 & 1 \end{array}\right).\]
\end{definition}

The cardinality of $\pasm_{n,n}$ is given by sequence A202751 in the Online Encyclopedia of Integer Sequences \cite{oeis1}. It is unlikely that there exists a product formula for $|\pasm_{m,n}|$, since, for example, $|\pasm_{6,6}|=1442764=2^2 \cdot 373 \cdot 967$. See Figure~\ref{pasm_table} for the number of $m \times n$ partial alternating sign matrices for $m,n \leq 6$, calculated with SageMath~\cite{sage}. Note that the number of $m \times n$ partial alternating sign matrices is the same as the number of those of size $n \times m$. To transform $M \in \pasm_{m,n}$ into $M' \in \pasm_{n,m}$, take the $i$th row of $M$, reverse the order of the entries, and use that as column $m-i+1$ in $M'$. For example, under this mapping the matrix $\left(\begin{array}{rrr}a & b & c \\ d & e & f \end{array}\right)$ would correspond to the matrix $\left(\begin{array}{rr} f & c \\ e & b \\ d & a \end{array}\right)$

\begin{figure}[hbtp]
\centering
\begin{tabular}{|c|c|c|c|c|c|c|c|}
\hline
\diaghead(1,-1)%
   {\theadfont nnn}%
   {$m$}{$n$} & 1 & 2 & 3 & 4 & 5 & 6 \\ \hline
1 & 2 & 3  & 4    & 5     & 6      & 7 \\ \hline
2 & 3 & 8  & 17   & 31    & 51     & 78 \\ \hline
3 & 4 & 17 & 62   & 184   & 462    & 1022 \\ \hline
4 & 5 & 31 & 184  & 924   & 3809   & 13197 \\ \hline
5 & 6 & 51 & 462  & 3809  & 26394  & 150777 \\ \hline
6 & 7 & 78 & 1022 & 13197 & 150777 & 1442764 \\ \hline
\end{tabular}
\caption{A table of the number of $m \times n$ partial alternating sign matrices (bottom).}
\label{pasm_table}
\end{figure}

\begin{remark} 
\label{rmk:fortin}
Partial alternating sign matrices can be viewed as a generalization of partial permutation matrices, which are matrices whose entries are in $\{0,1\}$ and which have at most one $1$ in any given row and column. $n \times n$ partial permutation and alternating sign matrices were studied in a different context by Fortin~\cite{Fortin}. He showed that, with a poset structure analogous to the strong Bruhat order, the lattice of partial alternating sign matrices is the \emph{MacNeille completion} of the poset of partial permutations. That is, the lattice of partial alternating sign matrices is the smallest lattice containing the poset of partial permutations. This is analogous to the result of Lascoux and Sch\"{u}tzenberger~\cite[Lemma 5.4]{Lascoux} that the lattice of $n\times n$ alternating sign matrices is the MacNeille completion of the strong Bruhat order on $S_n$.
\end{remark}

We now state our main result of this section, which is the goal of the remainder of this section.

\begin{theorem}\label{thm:bijections}
There are explicit bijections between each of the following:
\begin{itemize}
\item $m \times n$ partial alternating sign matrices, 
\item $(m,n)$-partial monotone triangles, 
\item $(m,n)$-partial height-function matrices, 
\item $(m,n)$-partial fully-packed loop configurations, 
\item $(m,n)$-rectangular ice configurations, 
\item the set of order ideals of $\textbf{P}_{m,n}$, and 
\item $(m,n)$-nests of osculating paths.
\end{itemize}
\end{theorem}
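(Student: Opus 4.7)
The plan is to use partial alternating sign matrices as the hub and establish direct bijections between $\pasm_{m,n}$ and each of the other six objects; composition then yields bijections between any pair. This is the same strategy employed in the square ASM setting in \cite{Propp, Striker1}, and so the task here is to identify the correct ``partial'' analogue of each definition so that the standard maps restrict or extend appropriately. Most of the bijections factor naturally through the partial height-function matrix, since heights encode exactly the same data as partial column sums plus partial row sums, so I would develop that one first and then use it as a clearinghouse for the geometric objects.

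I would proceed in the following order. First, $\pasm_{m,n} \leftrightarrow$ partial monotone triangles: given $M \in \pasm_{m,n}$, form partial column sums $S_{i,j} = \sum_{k \le i} M_{k,j}$, and let row $i$ of the triangle record the columns $j$ with $S_{i,j} = 1$. The conditions that the nonzero entries alternate in sign with a leading $+1$ and that each partial column sum lies in $\{0,1\}$ translate directly into strict increase along rows and weak monotonicity along diagonals, but the relaxed row/column sum condition means rows may now have varying length between $0$ and $n$ --- this is the key feature of the ``partial'' analogue. Invertibility is immediate: recover $M$ by differencing consecutive triangle rows. Second, $\pasm_{m,n} \leftrightarrow$ partial height-function matrices: define $H_{i,j}$ to be (column partial sums up to row $i$) plus (row partial sums up to column $j$), or equivalently fix the two boundary rays and propagate via the rule that horizontally or vertically adjacent entries differ by $\pm 1$ on edges where the matrix entry is $0$ away from the edge. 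Relaxation of the row/column sum condition shows up as the freedom in the far boundary of $H$.

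Third, I would handle the three ``graph-theoretic'' objects (partial fully-packed loops, rectangular ice, nests of osculating paths) via the height-function matrix. For each, the standard local rule assigns a configuration at the $(i,j)$-cell based on the four adjacent heights: the six height patterns correspond to the six ice configurations, which correspond to the six FPL tile types, which correspond to the six local path fragments (through, corner, touching, empty). The fact that we are in the partial (rectangular) setting changes only the allowed boundary data, not the local rule, so well-definedness of the map reduces to checking that the boundary ``exit'' pattern prescribed in the definitions of partial FPLs, rectangular ice, and nests of osculating paths matches the boundary of the height-function matrix for a PASM. Fourth, for order ideals of $\textbf{P}_{m,n}$: one identifies an element of the poset with an upward step in height, so $J(\textbf{P}_{m,n})$ is in bijection with height-function matrices by the usual ``order ideal $\leftrightarrow$ rank function'' correspondence; the poset $\textbf{P}_{m,n}$ must be defined so that its order ideals are exactly the legal height-function matrices, which constrains its cover relations to the two diagonal directions.

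The principal obstacle, and where I would spend the most care, is verifying that the relaxed boundary conditions match up consistently across all seven objects. In particular, the ``last nonzero entry in each row is $+1$'' condition and ``first nonzero entry in each column is $+1$'' condition in $\pasm_{m,n}$ need to be shown equivalent to the correct partial FPL boundary rule (some boundary edges present, some optional), to the right rectangular-ice arrow conditions, to the osculating-path endpoint conditions, and to the correct definition of $\textbf{P}_{m,n}$. Since the paper has not yet stated those definitions at this point, the proof in the body of Section~\ref{sec:pasm_bij} will interleave definitions with bijection verifications, one object at a time; once the local rules are correct, bijectivity at the level of individual cells is routine (a finite check of six cases), and the global bijection follows because both the map and its inverse are cell-local away from the boundary.
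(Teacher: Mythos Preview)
Your strategy is essentially the same as the paper's: break the theorem into a chain of lemmas, one per object, with the partial height-function matrix serving as the main clearinghouse. The paper proves exactly this via Lemmas~\ref{pasm-pmt}, \ref{pasm-phf}, \ref{pfpl-phf}, \ref{pfpl-ice}, \ref{phf-oi}, and \ref{pasm-olp}.

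Two organizational differences are worth flagging. First, the paper does not route osculating paths through the height function; it goes directly from the partial ASM (equivalently, from the rectangular ice configuration) to the nest of paths by the standard ``follow arrows right-then-down'' rule. Your proposed six-case local rule via heights would also work, but the direct route is cleaner because the osculating-path bijection is naturally stated in terms of the matrix entries themselves. Second, your description of the height function as ``column partial sums plus row partial sums'' is not quite the right formula: the paper instead passes through the \emph{corner-sum matrix} $c_{i,j}=\sum_{i'\ge i,\,j'\le j}M_{i',j'}$ and sets $h_{i,j}=i+j-2c_{i,n-j}$. This is the step where the asymmetric PASM condition (first nonzero in each column is $+1$, last nonzero in each row is $+1$) translates into the asymmetric boundary condition on $h$ (fixed along the top and left, free along the bottom and right), so getting the corner-sum orientation right is exactly the ``principal obstacle'' you identified. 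Finally, your guess that $\textbf{P}_{m,n}$ has cover relations only in ``the two diagonal directions'' undersells it: the poset is layered, with four cover relations per element (two within a layer, two between adjacent layers), so that the fibers $S_{i,j}$ over each grid position form chains whose lengths encode the height.
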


We prove this theorem via a series of lemmas and define the needed objects along the way. We begin with partial monotone triangles.

\begin{definition}\label{pmt}

An \emph{$(m,n)$-partial monotone triangle} is a triangular array of \textbf{non-negative} integers $\left(a_{i,j}\right)_{1 \leq j \leq i \leq m}$ with entries in $\{0,1,\ldots,n\}$ and with the following properties:

\begin{itemize}

\item rows are \emph{weakly} increasing: $a_{i,j} \leq a_{i,j+1}$,

\item \emph{nonzero} entries in rows are strictly increasing, and

\item diagonals are weakly increasing: $a_{i,j} \leq a_{i-1,j}$ and $a_{i,j} \leq a_{i+1,j+1}$.

\end{itemize}

\end{definition}

An example of a triangular array with indexing as described above is given in Figure~\ref{tri}, and an example of a partial monotone triangle can be found in Figure~\ref{examplesfig}

\begin{remark}
Objects similar to this definition have appeared in the literature before. In particular, a certain transformation of an $(n,n)$-partial monotone triangle is what is referred to as a ``generalized key'' in Section 5.2 of \cite{Fortin}.
\end{remark}

\begin{figure}[hbtp]
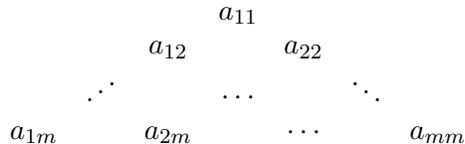

\centering
\[
\begin{array}{cccccccccc}
& & & a_{11} & & &\\
& & a_{12} & & a_{22}\\
& \reflectbox{$\ddots$} & & \cdots & & \ddots &\\
a_{1m} & & a_{2m} & & \cdots & & a_{mm}
\end{array}
\]
\caption{A triangular array of numbers with the indexing as described in Definition~\ref{pmt}.}
\label{tri}
\end{figure}

\begin{figure}[hbtp]
\centering
\[
\left(\begin{array}{rrrr}
1  & 0 & 0  & 0\\
0  & 0 & 1  & 0\\
-1 & 1 & 0  & 0\\
1  & 0 & -1 & 1
\end{array}\right)
\hspace{.2in}
\begin{array}{ccccccc}
  &   &   & 1 &   &   &  \\
  &   & 1 &   & 3 &   &  \\
  & 0 &   & 2 &   & 3 &  \\
0 &   & 1 &   & 2 &   & 4
\end{array}
\hspace{.2in}
\begin{pmatrix}
0 & 0 & 0 & 0 & 0\\
1 & 0 & 0 & 0 & 0\\
2 & 1 & 1 & 0 & 0\\
2 & 2 & 1 & 0 & 0\\
3 & 2 & 1 & 1 & 0
\end{pmatrix}
\hspace{.2in}
\begin{pmatrix}
0 & 1 & 2 & 3 & 4\\
1 & 2 & 3 & 4 & 3\\
2 & 3 & 2 & 3 & 2\\
3 & 4 & 3 & 2 & 3\\
4 & 3 & 4 & 3 & 2
\end{pmatrix}
\]
\caption{A $4 \times 4$ partial alternating sign matrix along with its corresponding partial monotone triangle, corner-sum matrix, and partial height-function matrix.}
\label{examplesfig}
\end{figure}

\begin{lemma}\label{pasm-pmt}
There is an explicit bijection between $m \times n$ partial alternating sign matrices and $(m,n)$-partial monotone triangles.
\end{lemma}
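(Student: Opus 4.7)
The plan is to generalize the classical ASM-to-monotone-triangle bijection described in Section~\ref{sec:prelim} by allowing zero padding on the left of each triangle row. Given $M\in\pasm_{m,n}$, define partial column sums $C_{i,j}=\sum_{k=1}^i M_{k,j}$; since the nonzero entries in column $j$ alternate in sign starting with a $+1$, every $C_{i,j}$ lies in $\{0,1\}$. Let $S_i=\{j:C_{i,j}=1\}$, and let the $i$-th row of the associated partial monotone triangle be the elements of $S_i$ listed in increasing order with $i-|S_i|$ zeros prepended. The padding is nonnegative because $|S_i|=\sum_j C_{i,j}$ equals the sum of the first $i$ row sums of $M$, which is at most $i$.

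I would then verify that the output satisfies Definition~\ref{pmt}. The row conditions (weak increase, strict increase of nonzero entries) are immediate from sorting a set of distinct positive integers and prepending zeros. For the diagonal conditions $a_{i,j}\le a_{i-1,j}$ and $a_{i,j}\le a_{i+1,j+1}$, I would split into two cases according to whether row $i$ of $M$ has sum $1$ or $0$: in the first case $|S_i|=|S_{i-1}|+1$ and rows $i$ and $i-1$ of the triangle have the same number of leading zeros, while in the second case row $i$ is the zero row, $|S_i|=|S_{i-1}|$, and row $i$ of the triangle has exactly one more leading zero. In either case, the alternation of the $\pm1$'s in row $i$ forces an interleaving of the sorted $S_i=\{s_\ell\}$ and $S_{i-1}=\{t_\ell\}$ of the form $s_1\le t_1\le s_2\le\cdots$ (first case) or $t_1\le s_1\le t_2\le\cdots$ (second case), which together with the leading-zero count yields both diagonal inequalities.

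For the inverse, given a partial monotone triangle $T$ with $S_i$ the set of nonzero entries of row $i$, set $C_{i,j}=1$ if $j\in S_i$ and $0$ otherwise (taking $C_{0,j}=0$), and define $M_{i,j}=C_{i,j}-C_{i-1,j}$. The two constructions are manifestly mutually inverse. The column conditions of Definition~\ref{pasm} are immediate because the sequence $C_{0,j},C_{1,j},\ldots,C_{m,j}$ lies in $\{0,1\}$ and starts at $0$: this gives column sum $0$ or $1$, sign alternation down each column, and a leading $+1$ where any column is nonzero.

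The main obstacle is verifying the row conditions of Definition~\ref{pasm}: that the $+1$'s (coming from $S_i\setminus S_{i-1}$) and $-1$'s (coming from $S_{i-1}\setminus S_i$) in row $i$ of the reconstructed $M$ alternate across the columns, with the last nonzero entry equal to $+1$ and row sum in $\{0,1\}$. For this I would read off the same interleaving of $S_i$ and $S_{i-1}$ directly from the two diagonal inequalities of Definition~\ref{pmt}, distinguishing two cases by whether the number of leading zeros in row $i$ equals or exceeds that of row $i-1$; the zero-propagation argument $a_{i-1,j}=0\Rightarrow a_{i,j}=0$ and $a_{i,j+1}=0\Rightarrow a_{i-1,j}=0$ forces the leading-zero count to rise by at most $1$, which gives $|S_i|-|S_{i-1}|\in\{0,1\}$ and hence the desired row-sum condition. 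In the first case the interleaving yields pattern $+1,-1,\ldots,-1,+1$ and row sum $1$; in the second case it yields the empty pattern (if $S_i=S_{i-1}$) or $-1,+1,\ldots,-1,+1$ and row sum $0$. The careful bookkeeping of these two cases is the only technically delicate part of the argument, and it reuses the same interleaving statement needed for the forward direction.
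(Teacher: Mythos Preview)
Your approach is essentially the paper's: form the partial-column-sum matrix $C$, record the positions of the $1$'s in each row with zero padding on the left, and invert by differencing successive rows of $C$; you simply fill in the interleaving verification that the paper leaves implicit. One slip to fix: in the row-sum-$0$ case you write ``row $i$ is the zero row,'' but of course row~$i$ can be nonzero (e.g.\ $(-1,1,0,0)$); your own interleaving $t_1\le s_1\le t_2\le\cdots$ and your inverse-direction pattern ``$-1,+1,\ldots,-1,+1$'' already handle this correctly, so only the phrase needs correcting.
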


\begin{proof}
Given an $m \times n$ partial alternating sign matrix $M$, construct $(C_{i,j})_{1 \leq i \leq m, 1 \leq j \leq n}$, its $m \times n$ matrix of partial column sums, by setting $C_{ij} = \sum_{k=1}^i M_{kj}$. By the alternating condition on partial alternating sign matrices, this will be a $\{0,1\}$-matrix. Construct an array of numbers from $(C_{i,j})$ as follows: in the $i$th row, record in increasing order the values $j$ for which $C_{ij}=1$. If there are less than $i$ such values, fill in zeros from the left until there are $i$ values in row $i$. By construction, the nonzero entries in each row are strictly increasing. The restrictions on partial alternating sign matrices, namely the rows and columns summing to $0$ or $1$, the alternating condition, and the fact that the first (last) nonzero entry in each column (row) must be $1$ guarantee that the diagonals are weakly increasing from southwest to northeast and northwest to southeast. Thus, the result is an $(m,n)$-partial monotone triangle. Since any matrix can be uniquely determined by its partial column sums, this is a one-to-one map.

Furthermore, this map is easily reversible. Given an $(m,n)$-partial monotone triangle $\left(a_{i,j}\right)$, first build an $m \times n$ $\{0,1\}$-matrix $(C_{i,j})$ by setting $C_{i,k} = 1$ whenever $a_{i,j} = k > 0$ and filling the rest of the matrix with zeros. This matrix $(C_{i,j})$ records the partial column sums of a unique partial alternating sign matrix, $M$, where $M_{1j}=C_{1j}$ and $M_{ij} = C_{ij}-C_{i-1,j}$ for $2 \leq i \leq m$.
\end{proof}

Putting the natural partial order (component-wise comparison) on partial monotone triangles forms a lattice, which is the MacNeille completion of the poset of partial permutations as mentioned in Remark~\ref{rmk:fortin}. This is proven for $m=n$ in \cite{Fortin}, and can be extended to $m \neq n$. A similar poset construction using all ``sign matrices'' was studied in \cite{Brualdi-Dahl}.

\begin{figure}[ht]
\centering
\begin{minipage}{.45\textwidth}
\centering
\includegraphics[scale=0.4]{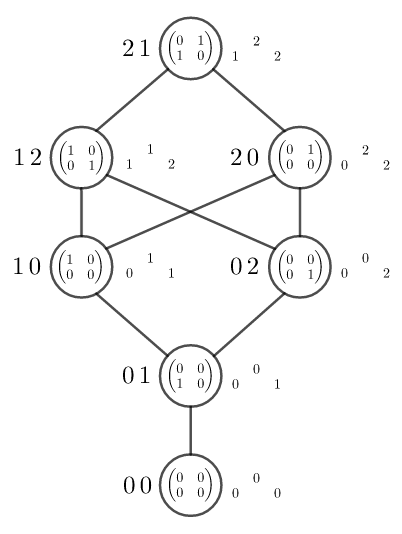}
\end{minipage}
\begin{minipage}{.45\textwidth}
\centering
\includegraphics[scale=0.33]{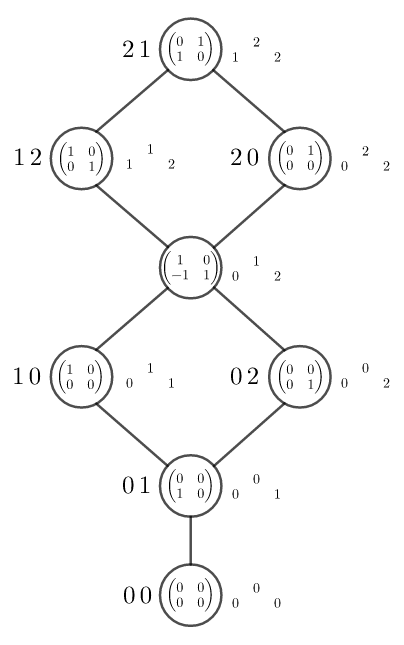}
\end{minipage}
\caption{The poset of partial permutations (left) and the poset of partial alternating sign matrices (right), for $m=n=2$.}
\label{posets}
\end{figure}

We now move on to \emph{corner-sum matrices}, which are useful for showing the correspondence with \emph{partial height-function matrices}.

\begin{definition}\label{csm}
Given any $m \times n$ matrix $\left(M_{i,j}\right)_{1\leq i \leq m, 1 \leq j \leq n}$, we can define its (north-east) \emph{corner-sum matrix} $\left(c_{i,j}\right)_{0\leq i \leq m, 0 \leq j \leq n}$ by setting $c_{i,j} = \displaystyle\sum_{\substack{i'\geq i \\ j'\leq j}} M_{i',j'}$.
\end{definition}

\begin{lemma}\label{pasm-csm}
There is an explicit bijection between $m \times n$ partial alternating sign matrices and $(m+1) \times (n+1)$ matrices whose first row and last column are all zeros, and whose entries are increasing by from top to bottom and right to left and whose entries increase by at most one.
\end{lemma}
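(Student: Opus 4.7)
The plan is to prove this by constructing the corner-sum matrix map from Definition~\ref{csm} explicitly and showing its inverse via an inclusion-exclusion formula. Given $M \in \pasm_{m,n}$, I would use the convention that matches Figure~\ref{examplesfig}, namely $c_{i,j} = \sum_{i' \leq i,\; j' > j} M_{i',j'}$ for $0 \leq i \leq m$ and $0 \leq j \leq n$. The zero boundary conditions are immediate: $c_{0,j}=0$ because the row range is empty, and $c_{i,n}=0$ because the column range is empty.

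To verify the monotonicity and increment conditions, I would observe that $c_{i,j}-c_{i-1,j} = \sum_{j'>j} M_{i,j'}$ is a right-to-left partial sum of row $i$ of $M$. Since the row of a PASM satisfies (row sum in $\{0,1\}$) + (nonzero entries alternating with last nonzero equal to $+1$), reading the row right-to-left the running sum stays in $\{0,1\}$. Therefore $c_{i,j}-c_{i-1,j} \in \{0,1\}$, which gives top-to-bottom monotonicity with unit increments. An entirely symmetric argument with top-to-bottom partial column sums gives $c_{i,j-1}-c_{i,j} \in \{0,1\}$, establishing right-to-left monotonicity with unit increments.

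For the inverse, I would set
\[
M_{i,j} \defeq (c_{i,j-1} - c_{i,j}) - (c_{i-1,j-1} - c_{i-1,j}),
\]
which lies in $\{-1,0,1\}$ since each parenthesized difference is in $\{0,1\}$. That the two maps are mutually inverse is a routine telescoping computation. The content then lies in checking that $M$ is a PASM: row $i$ sums telescope to $c_{i,0} - c_{i-1,0} \in \{0,1\}$ (using that column $n$ is zero), and column $j$ sums telescope to $c_{m,j-1} - c_{m,j} \in \{0,1\}$ (using that row $0$ is zero).

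The main obstacle, and the part that requires the most care, is verifying the alternating sign condition and the ``last/first nonzero is $+1$'' condition on rows and columns. For this I would compute the right-to-left partial sum of row $i$ of $M$ directly by telescoping:
\[
\sum_{j'=j+1}^n M_{i,j'} = (c_{i,j} - c_{i,n}) - (c_{i-1,j} - c_{i-1,n}) = c_{i,j} - c_{i-1,j} \in \{0,1\}.
\]
Since this partial sum lies in $\{0,1\}$ at every stage and increases by $M_{i,j} \in \{-1,0,1\}$ as we scan leftward, the only way to avoid leaving $\{0,1\}$ is for the nonzero entries to alternate, starting from the right with a $+1$ and alternating to $-1, +1, \ldots$; this is exactly the PASM row condition. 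The identical argument for top-to-bottom partial column sums gives the column condition, completing the bijection.
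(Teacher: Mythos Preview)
Your proof is correct and follows the same corner-sum approach as the paper, using the forward map of Definition~\ref{csm} and the inclusion--exclusion inverse; you have simply been more careful than the paper in verifying that the inverse map actually lands in $\pasm_{m,n}$ (the alternating and first/last nonzero conditions), which the paper's proof leaves implicit. Your adjustment of the summation convention to match Figure~\ref{examplesfig} is also appropriate, since the formula in Definition~\ref{csm} as written does not produce a zero first row and zero last column.
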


\begin{proof}
We claim the set of matrices described above is exactly the set of corner-sum matrices of $m \times n$ partial alternating sign matrices. The first row and last column of any such corner-sum matrix are all zeros by definition. The fact that an $m \times n$ partial alternating matrix has entries in $\{-1,0,1\}$, along with the alternating condition guarantees that adjacent entries in the corresponding corner-sum matrix differ by at most one. Note that given any corner-sum matrix $c$, one can recover the original matrix $M$ by $M_{ij}=c_{ij}-c_{i-1,j}-c_{i,j+1}+c_{i-1,j+1}$.
\end{proof}

See Figure~\ref{examplesfig} for an example of a partial alternating sign matrix along with its corresponding corner-sum matrix.

\begin{definition}\label{phf}

An \emph{(m,n)-partial height-function matrix} is a matrix $\left(h_{i,j}\right)_{0\leq i \leq m, 1 \leq j \leq n}$ with the following properties:
\begin{itemize}
\item all entries are non-negative,
\item $h_{0,k} = k$ for $0 \leq k \leq n$, and $h_{\ell,0} = \ell$ for $0 \leq \ell \leq m$, and
\item the difference between any pair of row-adjacent or column-adjacent entries is $\pm 1$.
\end{itemize}

\end{definition}

\begin{lemma}\label{pasm-phf}
There is an explicit bijection between $m \times n$ partial alternating sign matrices and $(m,n)$-partial height-function matrices.
\end{lemma}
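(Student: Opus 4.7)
The plan is to leverage Lemma~\ref{pasm-csm}: once an explicit bijection between the corner-sum matrices described there and the $(m,n)$-partial height-function matrices is in hand, composing with Lemma~\ref{pasm-csm} completes the proof. In the classical ASM setting the relation is $h_{i,j} = i+j-2\,c_{i,j}$; in the partial setting one instead needs a column reflection, and I would use
\[
h_{i,j} \defeq i + j - 2\, c_{i,\,n-j} \qquad (0\le i\le m,\ 0\le j\le n),
\]
with inverse $c_{i,k} = \tfrac12\bigl(i + n - k - h_{i,\,n-k}\bigr)$. Equivalently, $h_{i,j}$ is $i+j$ minus twice the sum of the entries of the PASM $M$ in its top-right $i\times j$ submatrix.

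To check that $\Phi(c)\defeq h$ is a partial height-function matrix, the boundary conditions $h_{0,k}=k$ and $h_{\ell,0}=\ell$ follow from $c$ vanishing on its first row and last column. Adjacency is immediate from the calculation $h_{i,j}-h_{i,j-1} = 1 - 2(c_{i,n-j}-c_{i,n-j+1})\in\{\pm1\}$ together with the analogous column computation, since by Lemma~\ref{pasm-csm} adjacent entries of $c$ differ by $0$ or $1$. For non-negativity, starting from the zero top row of $c$ and stepping down by $0$ or $1$ gives $c_{i,n-j}\le i$; starting from the zero right column and stepping left by $0$ or $1$ gives $c_{i,n-j}\le j$; hence $c_{i,n-j}\le\min(i,j)\le(i+j)/2$, so $h_{i,j}\ge 0$.

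Conversely, I would verify that $\Phi^{-1}(h)$ satisfies the conditions in Lemma~\ref{pasm-csm}. Integer-valuedness rests on the parity identity $h_{i,j}\equiv i+j\pmod 2$, a straightforward induction from $h_{0,0}=0$ using the $\pm1$ adjacency condition. The bound $h_{i,j}\le i+j$, obtained by walking along any monotone length-$(i+j)$ path from $(0,0)$ to $(i,j)$, combined with non-negativity of $h$, gives non-negativity of $c$. The boundary vanishings $c_{0,k}=0$ and $c_{i,n}=0$ follow from the boundary equalities for $h$, and direct substitution from the $\pm 1$ step conditions on $h$ shows that row- and column-adjacent entries of $c$ differ by $0$ or $1$ with the correct sign to match the characterization from Lemma~\ref{pasm-csm}. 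The main obstacle is essentially bookkeeping around the reflection $j\leftrightarrow n-j$ and matching the monotonicity directions; once this is done, composing with Lemma~\ref{pasm-csm} yields the desired bijection.
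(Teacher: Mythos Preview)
Your proposal is correct and follows essentially the same route as the paper: compose the bijection of Lemma~\ref{pasm-csm} with the affine map $h_{i,j}=i+j-2\,c_{i,n-j}$, and check that this carries the corner-sum conditions to the partial height-function conditions and back. If anything, you supply more detail than the paper does (non-negativity of $h$, the parity argument for integrality of the inverse), but the construction and the key formula are identical.
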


\begin{proof}
The bijection is given by first mapping a partial alternating sign matrix $\left(M_{i,j}\right)_{1 \leq i \leq m, 1 \leq j, \leq n}$ to its corner-sum matrix $(c_{i,j})$. By Lemma~\ref{pasm-csm}, this is an $(m+1) \times (n+1)$ matrices whose first row and last column are all zeros, and whose entries increase by at most one from top to bottom and right to left. Then define the matrix $\left(h_{i,j}\right)_{0\leq i \leq m, 0 \leq j, \leq n}$ as $h_{i,j} = i+j-2c_{i,n-j}.$ $(h_{i,j})$ satisfies Definition~\ref{phf} because of the following. The condition that the first row and last column of the corner-sum matrix are $0$ guarantees that the first row and first column start at 0 and increase by 1. The condition that the entries of the corner-sum matrix increase by at most one from top to bottom and right to left is equivalent to the condition that adjacent entries in $(h_{i,j})$ differ by exactly one. In particular, when moving along a row (right-to-left) or column (top-to-bottom), if the entry stays the same in the corner-sum matrix, then the corresponding entry in $(h_{i,j})$ goes down by one, and if the entry increases by one in the corner-sum matrix, then the corresponding entry in $(h_{i,j})$ goes up by one (this can be seen using the formula $h_{i,j} = i+j-2c_{i,n-j}$). Thus $(h_{i,j})$ is a partial height-function matrix.

Given a partial height-function matrix $(h_{i,j})$, we can recover $(c_{i,j})$ by setting $c_{i,n-j}=\frac{i+j-h_{i,j}}{2}$, from which we can recover the partial alternating sign matrix as mentioned in the proof of Lemma~\ref{pasm-csm}.
\end{proof}

An example of a partial alternating sign matrix and its corresponding partial height-function matrix is given in Figure~\ref{examplesfig}.

The next several bijections make use of the graph $G_{m,n}$ of Definition~\ref{gridmn}. These bijections will give us more interesting angles from which to study partial alternating sign matrices, with visualizable objects and connections to physics and dynamics.

\begin{definition}\label{pfpl}
An \emph{$(m,n)$-partial fully-packed loop configuration} is a subgraph of $G_{m,n}$ whose edges include $v_{0,j}v_{1,j}$ for $j$ odd and $v_{i,0}v_{i,1}$ for $i$ even, and such that each interior vertex has exactly two incident edges. 
\end{definition}

See Figure~\ref{fig:ice-fpl}, left, for an example.

\begin{figure}[hbtp]
\centering
\includegraphics[width=1.75in]{fplnew.png} \hspace{0.5in} \includegraphics[width=1.75in]{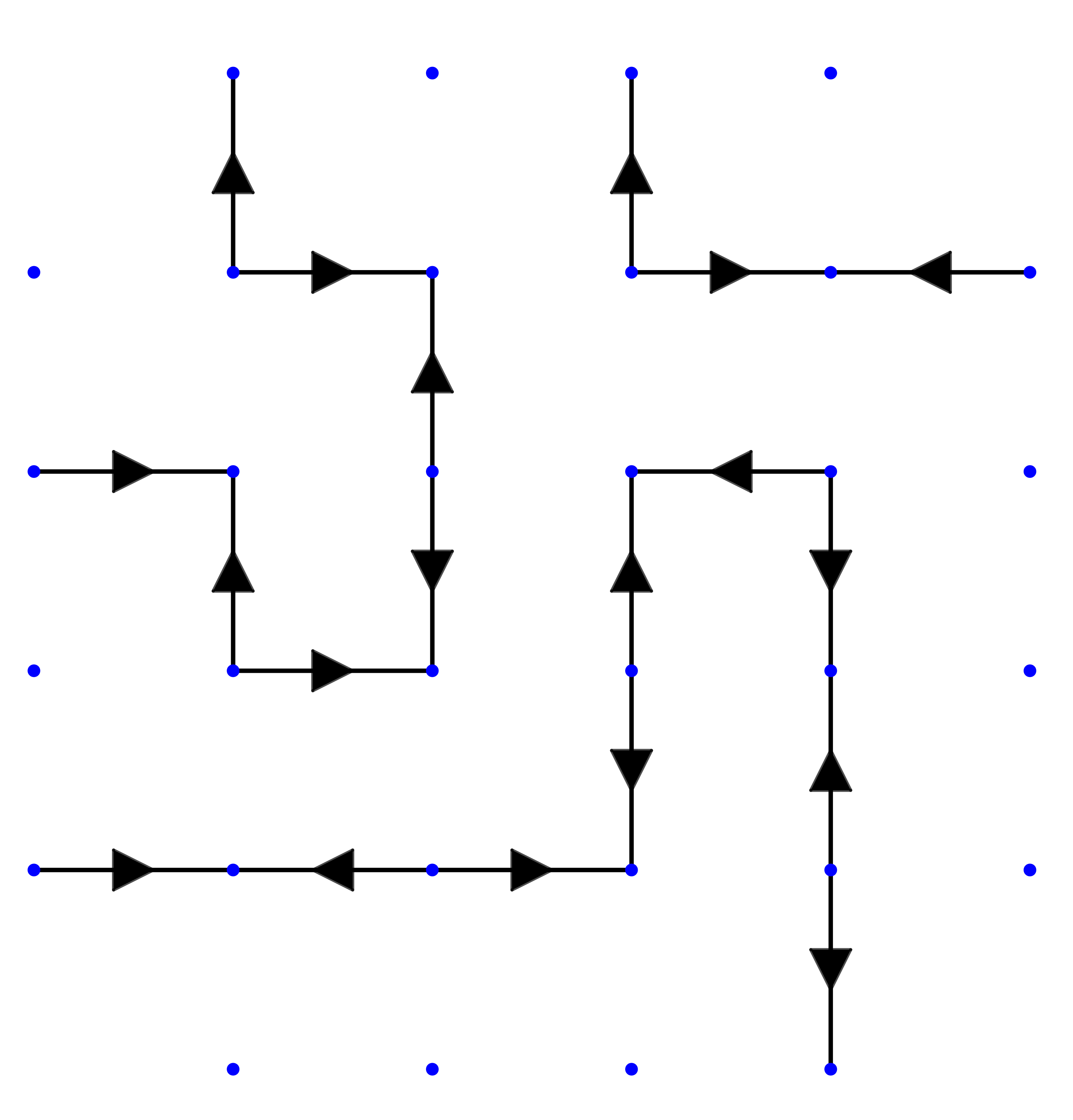} \hspace{0.5in} \includegraphics[width=1.75in]{rectangularice-update.png}
\caption{An example of a $(4,4)$-partial fully-packed loop configuration, this same configuration with its edges directed, and the associated $(4,4)$-rectangular ice configuration corresponding to the partial alternating sign matrix from Figure~\ref{examplesfig}.}
\label{fig:ice-fpl}
\end{figure}

\begin{lemma}\label{pfpl-phf}
There is an explicit bijection between $(m,n)$-partial fully-packed loop configurations and $(m,n)$-partial height-function matrices.
\end{lemma}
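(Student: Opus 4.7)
The plan is to adapt the classical fully-packed loop / height-function correspondence to the partial, rectangular setting: I will describe a local $\pm 1$ rule that, integrated along dual-grid paths, converts a partial FPL into a height matrix, and then verify the axioms of Definition~\ref{phf} and construct the inverse.

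Concretely, I would set $h_{0,0} := 0$ and define $h_{i,j}$ at the remaining faces of $G_{m,n}$ by walking from face $(0,0)$ along any dual path to face $(i,j)$ and adding $\pm 1$ at each step. The sign at each step is determined by two pieces of data: whether the crossed edge of $G_{m,n}$ is present in the partial FPL, and the parity of that edge (which may be taken from the parity of the vertex on one specified side). This is the same rule that realizes the standard FPL-to-height-function bijection in the square case, applied to $G_{m,n}$.

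I would then verify the four conditions required by Definition~\ref{phf}. First, well-definedness of $h_{i,j}$ reduces to the statement that the sum of height increments around any interior vertex of $G_{m,n}$ vanishes; this is an immediate consequence of the partial-FPL condition that each interior vertex has exactly two incident edges, combined with the parity rule. Second, the boundary values $h_{0,k}=k$ and $h_{\ell,0}=\ell$ are produced precisely by the required boundary edges $v_{0,j}v_{1,j}$ ($j$ odd) and $v_{i,0}v_{i,1}$ ($i$ even) of Definition~\ref{pfpl}: these supply the correct $+1$ step at each move along the top row or left column of faces. Third, the $\pm 1$ condition on row- and column-adjacent entries is immediate from the construction. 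Fourth, non-negativity can be verified by an induction along diagonals from the top-left boundary, or by transporting the property through the already-established partial alternating sign matrix correspondence using the relation $h_{i,j} = i+j-2c_{i,n-j}$ noted in the proof of Lemma~\ref{pasm-phf}.

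For the inverse, given a partial height-function matrix $(h_{i,j})$, I would include an edge of $G_{m,n}$ exactly when the heights on its two adjacent faces satisfy the ``edge present'' case of the local rule. One then checks that the resulting subgraph contains the two specified families of boundary edges and has exactly two incident edges at every interior vertex, the latter forced by the local rule and the $\pm 1$ condition. The main obstacle will be pinning down the local parity rule so that all three features align at once: path-independence of $h$ via the two-edges condition, agreement with the specified boundary edges, and invertibility. A careful case check of each vertex configuration in each parity class, together with the two parity classes of specified boundary edges, should suffice, and non-negativity appears to be the only condition that is not essentially automatic from the setup.
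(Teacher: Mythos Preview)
Your approach is correct and describes the same bijection as the paper, just from the opposite direction and at a higher level of abstraction. The paper starts from the height-function side and writes the local rule in closed form: a horizontal edge of $G_{m,n}$ is present between two adjacent face values exactly when they are $2k$ and $2k+1$ (in either order), and a vertical edge is present exactly when they are $2k-1$ and $2k$. With the rule stated this way, the boundary conditions and the two-incident-edges property follow by inspection, and there is no need for a separate path-independence argument or a non-negativity verification. The ``main obstacle'' you flag---pinning down the parity convention so that everything aligns---is exactly what the paper bypasses by giving this explicit $2k/2k\pm 1$ formulation; once you write it down, your integration description and the paper's edge rule are visibly inverse to one another.
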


\begin{proof}
See Figure~\ref{fpl-ex} for an example of this bijection. Starting with an $(m,n)$-partial height-function matrix $(h_{i,j})$, overlay $G_{m,n}$ so that each interior number in the partial height-function matrix has four surrounding vertices. Separate two horizontally adjacent numbers by an edge if the numbers are $2k$ and $2k+1$ (in either order) for any integer $k$. Separate two vertically adjacent numbers by an edge if the numbers are $2k-1$ and $2k$ (in either order) for any integer $k$. Since $h_{0,k} = k$ for $0 \leq k \leq n$, and $h_{\ell,0} = \ell$ for $0 \leq \ell \leq m$, we will have the required boundary conditions for an $(m,n)$-partial fully-packed loop. Also, each partial height-function matrix entry differs by exactly one, so this yields the condition that interior vertices have exactly two incident edges. For the reverse map, start with the partial fully-packed loop configuration and fill in the boundary conditions for the partial height-function matrix. Then, simply use the rule described above in reverse, using the condition that each adjacent entry in the partial height-function matrix must be exactly one apart.
\end{proof}

\begin{figure}[ht]
\centering
\includegraphics[scale=0.25]{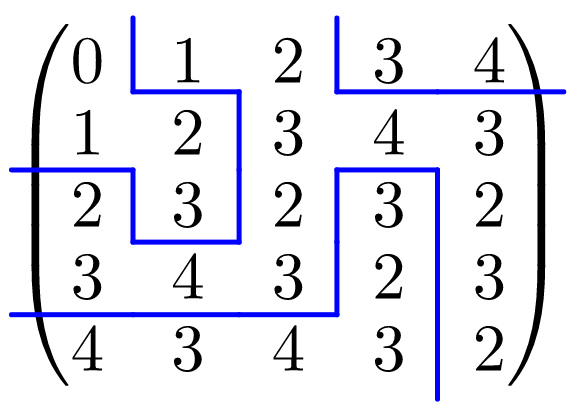}
\caption{The partial height-function matrix from Figure~\ref{examplesfig} overlaid with its corresponding partial fully-packed loop configuration.}
\label{fpl-ex}
\end{figure}

\begin{definition}\label{ice}
An \emph{$(m,n)$-rectangular ice configuration} is a directed graph whose underlying graph is $G_{m,n}$ such that the directed edges \textbf{along the left} point inward, the directed edges \textbf{on the top} point outward, and each interior vertex has both in-degree and out-degree equal to two.
\end{definition}

An example of a rectangular ice configuration is given in Figure~\ref{fig:ice-fpl}, right. In general, when a directed graph whose underlying graph is $G_{m,n}$ has all directed edges pointing inward along the left \emph{and right} and outward along the top \emph{and bottom}, that graph is said to have \emph{domain wall boundary conditions}. Here, we have a partial analogue of those conditions. When $m=n$ and domain wall boundary conditions are satisfied, we have \emph{square ice configurations}, which are in bijection with usual alternating sign matrices.

\begin{lemma}\label{pfpl-ice}
There is an explicit bijection between $(m,n)$-rectangular ice configurations and $(m,n)$-partial fully-packed loop configurations.
\end{lemma}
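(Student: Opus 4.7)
The plan is to define the bijection through a parity-based edge rule. Observe that every edge of $G_{m,n}$ joins vertices of opposite parity (its endpoints differ by $1$ in exactly one coordinate), so each edge has a canonical \emph{even endpoint} and \emph{odd endpoint}. Given a rectangular ice configuration, I declare an edge of $G_{m,n}$ to belong to the associated partial fully-packed loop configuration exactly when it is directed from its even endpoint to its odd endpoint. Conversely, given a partial fully-packed loop configuration $F$, I direct each edge of $F$ from its even endpoint to its odd endpoint, and each remaining edge in the opposite direction. These two assignments are patently mutual inverses, so the content of the lemma reduces to checking that each map respects the defining conditions of the two classes.

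The verification is a short case check. At any interior vertex $v_{i,j}$, all four incident edges connect $v_{i,j}$ to neighbors of opposite parity. If $v_{i,j}$ is even, the selected edges at $v_{i,j}$ are precisely the ones directed out of $v_{i,j}$, of which the ice condition guarantees exactly two; if $v_{i,j}$ is odd, the selected edges are those directed into $v_{i,j}$, again exactly two. For the boundary, a left edge $v_{i,0}v_{i,1}$ points inward by the ice condition, so it is selected if and only if its even endpoint is $v_{i,0}$, that is, if and only if $i$ is even, matching Definition~\ref{pfpl}; a top edge $v_{0,j}v_{1,j}$ points outward, so it is selected if and only if its even endpoint is $v_{1,j}$, that is, if and only if $j$ is odd, again matching Definition~\ref{pfpl}.

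The reverse direction is symmetric. The partial fully-packed loop degree condition forces in-degree and out-degree two at every interior vertex under the prescribed orientation, and the same boundary analysis confirms that every left edge points inward and every top edge points outward, regardless of the parity of $i$ or $j$. The only conceptual subtlety is ensuring the parity-based selection rule is unambiguous; this is immediate because every edge of $G_{m,n}$ has a unique even endpoint, so I do not anticipate any real obstacle in executing the proof.
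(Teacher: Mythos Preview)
Your proposal is correct and follows essentially the same approach as the paper: both define the bijection by selecting exactly those edges directed from their even endpoint to their odd endpoint, and invert by orienting edges of $F$ even-to-odd and the complementary edges odd-to-even. Your writeup is in fact somewhat more thorough than the paper's, since you explicitly verify the boundary conditions in both directions (in particular that the non-selected left/top edges also receive the required orientation in the inverse map), whereas the paper leaves this implicit.
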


\begin{proof}
Given an $(m,n)$-rectangular ice configuration, keep only those edges which start at an even vertex and end at an odd vertex (as defined in Definition \ref{gridmn}). Note that there is no loss of information here, as each interior vertex had in-degree and out-degree equal to two. Then make those edges undirected. Again, we lose no information here because we know that each of these edges was originally directed towards an odd vertex. Each vertex will now have degree equal to two, resulting in an $(m,n)$-partial fully-packed loop configuration by definition. Note that since we started with edges along the left pointing inward and edges along the top pointing outward, keeping edges that were originally directed towards an odd vertex give us the fixed boundary conditions along  the top and left edges for a partial fully-packed loop.

For the reverse direction, we simply start with an $(m,n)$-partial fully-packed loop configuration ``undo'' each of the steps: direct the edges so that they go from even vertices to odd vertices. Then, fill in the remaining directed edges so that each interior vertex has in- and out-degree equal to 2. The boundary condition on rectangular ice that the directed edges along the left point inward and the directed edges on the top point outward is satisfied because we started with a partial fully-packed loop, which by definition has edges $v_{0,j}v_{1,j}$ for $j$ odd and $v_{i,0}v_{i,1}$ for $i$ even.
\end{proof}
\noindent See Figure \ref{fig:ice-fpl} for an example of this bijection.

\begin{remark}
\label{rem:corresp}
Note that this result, along with a horizontal reflection due to our conventions, coincides with the usual correspondence between alternating sign matrices and square ice configurations. This correspondence is as follows. At each vertex which comes from a 1 in the matrix, the arrows on the horizontal edges of the digraph both point inwards, and the arrows on the vertical edges both point outwards. At each vertex which comes from a -1 in the matrix, the arrows on the horizontal edges both point outwards and the arrows on the vertical edges point outwards. At each vertex which comes from a 0 in the matrix, there are one each of inward and outward-pointing vertical and horizontal edges.
\end{remark}

We now relate partial alternating sign matrices to order ideals of a certain poset (recall Definitions~\ref{def:poset} and \ref{def:oi}). This, along with the connection to partial fully-packed loop configurations lays the groundwork for the study of related dynamics in Section~\ref{sec:pasm_dyn}.

\begin{definition}\label{pyramidposet}

Define the poset elements $\textbf{P}_{m,n}$ as the coordinates $(i,j,k)$ in $\bbz^3$ such that $0 \leq k \leq m-1$, $k \leq j \leq n-1$, and $k \leq i \leq m-1$. Define the partial order via the following covering relations: $(i,j,k)$ covers $(i+1, j, k)$, $(i, j+1, k)$, $(i-1, j, k-1)$, and $(i, j-1, k-1)$ whenever these coordinates are poset elements. We sometimes refer to this poset as the \emph{PASM poset}.

\end{definition}

See Figure~\ref{fig:pyramid} for an example of $\textbf{P}_{m,n}$. This poset is somewhat related to one in the usual alternating sign matrix setting, sometimes denoted $\textbf{A}_n$ \cite{Striker1}. In that setting, each ``layer'' is a triangular poset instead of a rectangular one. For comparison, an example (with some coordinates shifted to fit our conventions) is given in Figure~\ref{fig:pyramid}. The construction of $\textbf{P}_{m,n}$ directly gives us the following proposition and corollary.

\begin{proposition}\label{posetprop}

$\textbf{P}_{m,n}$ is a ranked poset in which the rank of $(i,j,k)$ in $\textbf{P}_{m,n}$ equals $m+n-2-i-j+2k$. It has a unique minimal element $(m-1,n-1,0)$  which has rank $0$. The maximal elements are of the form $(i,i,i)$ and have rank $m+n-2$.

\end{proposition}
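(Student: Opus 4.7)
The plan is to define the candidate rank function, verify it decreases by one along every covering relation, and then read off the minimum and maximal elements from the definition together with the coordinate constraints of $\textbf{P}_{m,n}$. Set $\rho(i,j,k) \defeq m+n-2-i-j+2k$. First I would check that $\rho$ decreases by exactly one across each of the four covering relations in Definition~\ref{pyramidposet}: for $(i,j,k) \gtrdot (i+1,j,k)$ and $(i,j,k) \gtrdot (i,j+1,k)$, exactly one of $i$ or $j$ increases by one below, so $\rho$ drops by one; for $(i,j,k) \gtrdot (i-1,j,k-1)$ the change is $(-i+(i-1)) + 2(k-(k-1)) = -1+2 = 1$, and symmetrically for $(i,j,k) \gtrdot (i,j-1,k-1)$. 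This shows $\textbf{P}_{m,n}$ is ranked with rank function $\rho$.

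For the minimum, the constraints $k \ge 0$, $i \le m-1$, and $j \le n-1$ force $\rho(i,j,k) \ge 0$, with equality precisely at $(m-1,n-1,0)$. To confirm this point is the unique minimum, I would verify directly that every other element of $\textbf{P}_{m,n}$ admits a downward cover: if $i<m-1$ then $(i+1,j,k)$ lies below it, if $j<n-1$ then $(i,j+1,k)$ does, and if $i=m-1$, $j=n-1$, $k>0$ then $(m-2,n-1,k-1)$ does. Meanwhile $(m-1,n-1,0)$ itself has no downward cover, since each of the four candidates $(m,n-1,0)$, $(m-1,n,0)$, $(m-2,n-1,-1)$, $(m-1,n-2,-1)$ violates one of the defining constraints $i\le m-1$, $j\le n-1$, $k\ge 0$. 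Hence $(m-1,n-1,0)$ is the unique minimum and has rank $0$.

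For the maxima, I enumerate the upward covers of $(i,j,k)$ by inverting each of the four cover relations: they are $(i-1,j,k)$, $(i,j-1,k)$, $(i+1,j,k+1)$, and $(i,j+1,k+1)$, whenever those lie in the poset. The first two are in $\textbf{P}_{m,n}$ iff $i>k$ and $j>k$, respectively, so maximality forces $i=j=k$; conversely, for $(k,k,k)$ the remaining candidates $(k+1,k,k+1)$ and $(k,k+1,k+1)$ each violate $k+1 \le j$ or $k+1 \le i$, so no upward cover exists. Thus the maximal elements are exactly those of the form $(i,i,i)$, and $\rho(i,i,i) = m+n-2$. The whole argument is bookkeeping with no serious obstacle; the main place to be careful is enumerating the upward covers correctly by reading each downward cover type in reverse and then faithfully translating the coordinate constraints into inequalities on $i$, $j$, $k$.
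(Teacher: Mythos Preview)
Your proof is correct. The paper does not actually supply a proof of this proposition; it simply remarks that ``the construction of $\textbf{P}_{m,n}$ directly gives us the following proposition,'' so your explicit verification of the rank function along each of the four covering relations, together with the case analysis identifying the minimum and maximal elements, is exactly the routine check the paper is gesturing at.
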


\begin{figure}[htbp]
\centering
\includegraphics[scale=0.5]{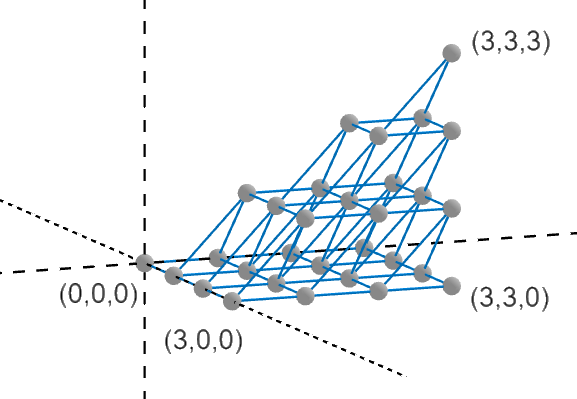}\hspace{1cm} \includegraphics[height=1.75in]{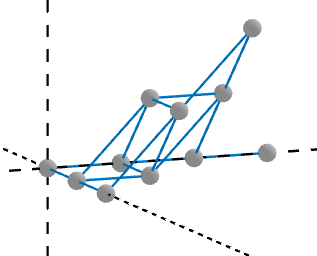}
\caption{An example of the PASM poset $\textbf{P}_{4,4}$ plotted in $\bbr^3$ (left) as well as the poset $\textbf{A}_4$ (right) for comparison. Elements are greater in $\textbf{P}_{4,4}$ moving northwest and up from the minimum element $(3,3,0)$.}
\label{fig:pyramid}
\end{figure}

\begin{corollary}
Let $a = \min(m,n)$ and $b=\max(m,n)$. Then $J(\textbf{P}_{m,n})$ is a distributive lattice of rank $\frac{-a^3+3a^2b+3ab+a}{6}$.
\end{corollary}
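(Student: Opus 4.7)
The plan is to split the claim into its two assertions --- distributivity and the rank formula --- and handle them separately. That $J(\mathbf{P}_{m,n})$ is a distributive lattice is a standard fact about order ideals of any finite poset and was already invoked in the discussion after Definition~\ref{def:oi}, so no new argument is required there. For the rank, I would observe that every covering relation $X \lessdot X'$ in $J(P)$ has the form $X' = X \cup \{p\}$ for some $p \in P$, so $J(\mathbf{P}_{m,n})$ is ranked by cardinality and the length of any maximal chain from $\emptyset$ to $\mathbf{P}_{m,n}$ equals $|\mathbf{P}_{m,n}|$. This reduces the problem to counting the elements of $\mathbf{P}_{m,n}$.

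Next, I would slice $\mathbf{P}_{m,n}$ by the third coordinate. From Definition~\ref{pyramidposet}, the slice at height $k$ consists of the pairs $(i,j)$ with $k \leq i \leq m-1$ and $k \leq j \leq n-1$, giving $(m-k)(n-k)$ elements; this slice is nonempty precisely when $0 \leq k \leq a-1$. Summing yields
\[
|\mathbf{P}_{m,n}| \;=\; \sum_{k=0}^{a-1}(m-k)(n-k).
\]
The right-hand side is symmetric in $m$ and $n$, so without loss of generality I may take $\{m,n\} = \{a,b\}$ and write each factor as $(a-k)$ and $(b-k)$.

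A short algebraic simplification then closes the proof. Reindexing by $j = a-k$ converts the sum to $\sum_{j=1}^{a} j(b-a+j)$, which by the usual power-sum formulas equals $(b-a)\tfrac{a(a+1)}{2} + \tfrac{a(a+1)(2a+1)}{6}$. Factoring out $\tfrac{a(a+1)}{6}$ gives $\tfrac{a(a+1)(3b-a+1)}{6}$, and expanding the numerator yields $\tfrac{-a^{3}+3a^{2}b+3ab+a}{6}$, matching the stated formula.

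I do not expect any real obstacle: the argument is essentially bookkeeping built on top of Proposition~\ref{posetprop} and the layered definition of $\mathbf{P}_{m,n}$. The only subtlety worth flagging is that, although Definition~\ref{pyramidposet} allows $k$ up to $m-1$, the constraint $k \leq j \leq n-1$ forces the effective upper limit to be $\min(m,n)-1 = a-1$, which is precisely what lets the final formula be written cleanly in terms of $a$ and $b$.
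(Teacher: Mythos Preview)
Your proposal is correct and follows essentially the same approach as the paper: both observe that the rank of $J(\mathbf{P}_{m,n})$ equals $|\mathbf{P}_{m,n}|$, slice the poset by the third coordinate $k$ to count $(a-k)(b-k)$ elements per layer, and sum over $0 \le k \le a-1$. Your version is simply more explicit about the distributivity, the reason the upper limit is $a-1$, and the algebra turning the sum into the closed form.
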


\begin{proof}Since we have an explicit construction of $\textbf{P}_{m,n}$, we can use this to determine the rank of $J(\textbf{P}_{m,n})$ (viewed as the lattice of order ideals by inclusion). The maximum element of $J(\textbf{P}_{m,n})$ has rank equal to the number of elements in $P_{m,n}$. At each $k$ value, we have $(a-k) \times (b-k)$ elements, and so we have $\displaystyle\sum_{k=0}^{a-1} (a-k)(b-k) = \frac{-a^3+3a^2b+3ab+a}{6}$ total elements.
\end{proof}

We now show that the order ideals of this poset are in correspondence with the other objects we have been studying.

\begin{lemma}\label{phf-oi}
There is an explicit bijection between $J(\textbf{P}_{m,n})$ and $(m,n)$-partial height-function matrices.
\end{lemma}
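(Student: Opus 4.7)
The plan is to give explicit formulas in both directions. To an order ideal $X \in J(\textbf{P}_{m,n})$ I associate the matrix
$$\phi(X)_{a,b} \defeq a + b - 2\, N_{b-1,a-1}(X), \qquad N_{i,j}(X) \defeq |\{k : (i,j,k) \in X\}|,$$
for $0 \le a \le m$ and $0 \le b \le n$; on the boundary $a=0$ or $b=0$ the indicated triple is not in $\textbf{P}_{m,n}$, so the count is zero and $\phi(X)_{0,b}=b$, $\phi(X)_{a,0}=a$. Conversely, to a partial height-function matrix $h$ I associate
$$\psi(h) \defeq \{(i,j,k) \in \textbf{P}_{m,n} : h_{j+1,i+1} \le i + j - 2k\}.$$
Intuitively, each poset element $(i,j,k)$ records one unit of the ``drop'' of $h$ below its maximal value $a+b$ at position $(a,b)=(j+1,i+1)$; that position admits up to $\min(a,b)$ units of drop, matching the number of poset elements with fixed $(i,j)=(b-1,a-1)$.

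Before checking that $\phi(X)$ is a PHF matrix I would prove a structural lemma: for each $(i,j)$, the set $\{k:(i,j,k) \in X\}$ is an initial segment $\{0,1,\ldots,N_{i,j}-1\}$ of $\{0,\ldots,\min(i,j)\}$. The key observation is that $(i,j,k+1) > (i,j,k)$ in $\textbf{P}_{m,n}$ via the chain $(i,j,k+1) \gtrdot (i-1,j,k) \gtrdot (i,j,k)$ built from a cross-layer and an in-layer cover in Definition~\ref{pyramidposet}, so $(i,j,k+1) \in X$ forces $(i,j,k) \in X$. With the lemma in hand, the three conditions of Definition~\ref{phf} fall out: the boundary values are immediate, nonnegativity is $\phi(X)_{a,b} \ge a+b-2\min(a,b)=|a-b|$, and the $\pm 1$ adjacency condition reduces to $N_{i+1,j}-N_{i,j} \in \{0,1\}$ and $N_{i,j+1}-N_{i,j} \in \{0,1\}$. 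The lower bounds follow from the in-layer covers $(i,j,k) \gtrdot (i+1,j,k)$ and $(i,j,k) \gtrdot (i,j+1,k)$. For the upper bounds I argue by contradiction: if $N_{i+1,j} \ge N_{i,j}+2$, then $(i+1,j,N_{i,j}+1) \in X$, and applying the diagonal cover $(i+1,j,k) \gtrdot (i,j,k-1)$ with $k=N_{i,j}+1$ forces $(i,j,N_{i,j}) \in X$, contradicting the definition of $N_{i,j}$; a short check using $\min(i+1,j)-1 \le \min(i,j)$ confirms both triples lie in $\textbf{P}_{m,n}$. The argument for $N_{i,j+1}$ is symmetric, using the cover $(i,j+1,k) \gtrdot (i,j,k-1)$.

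In the reverse direction, $\psi(h)$ is an order ideal because each of the four cover relations of $\textbf{P}_{m,n}$ preserves the defining inequality. For instance, passing from $(i,j,k)$ to $(i+1,j,k)$ requires $h_{j+1,i+2} \le (i+1)+j-2k$, which follows from $h_{j+1,i+2} \le h_{j+1,i+1}+1 \le (i+j-2k)+1$; the three other cases are analogous. The identity $\psi \circ \phi = \mathrm{id}$ is precisely the initial-segment property: $(i,j,k) \in \psi(\phi(X)) \iff k < N_{i,j}(X) \iff (i,j,k) \in X$. For $\phi \circ \psi = \mathrm{id}$, a direct count, together with the parity identity $h_{a,b} \equiv a+b \pmod 2$ (which holds because $h$ equals $a+b$ on the boundary and changes by $\pm 1$), shows that the cardinality of $\{k : (b-1,a-1,k) \in \psi(h)\}$ equals $(a+b-h_{a,b})/2$, recovering $\phi(\psi(h))_{a,b}=h_{a,b}$. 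The main obstacle I anticipate is purely notational: the poset coordinates $(i,j,k)$ are $0$-indexed with $i \le m-1$, $j \le n-1$, while PHF positions $(a,b)$ run over $0 \le a \le m$, $0 \le b \le n$, and the correct identification swaps coordinates, placing poset element $(i,j,k)$ at height-matrix position $(a,b)=(j+1,i+1)$.
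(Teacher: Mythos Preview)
Your approach is exactly the paper's: count, for each height-function position, how many layers of the poset fiber over that position lie in the order ideal, and set $h = (\text{row}+\text{col}) - 2\cdot(\text{count})$. You have supplied considerably more detail than the paper does (the initial-segment lemma, the explicit inverse $\psi$, the parity argument for $\phi\circ\psi=\mathrm{id}$), and those details are correct.

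There is one genuine slip. The coordinate swap you insist on at the end is wrong: the correct identification is $(a,b)=(i+1,j+1)$, not $(j+1,i+1)$. The poset $\textbf{P}_{m,n}$ has $0\le i\le m-1$ and $0\le j\le n-1$, while the height-function matrix has $0\le a\le m$ and $0\le b\le n$; the only way the ranges match is without the swap. Concretely, with your formula $\phi(X)_{a,b}=a+b-2N_{b-1,a-1}(X)$, if $m<n$ and you take any $b$ with $m<b\le n$, then $(b-1,a-1,k)$ is never a poset element, so $N_{b-1,a-1}(X)=0$ for every order ideal $X$, forcing $\phi(X)_{a,b}=a+b$ across an entire block of columns regardless of $X$; this is not surjective onto $(m,n)$-partial height-function matrices. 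Replace $N_{b-1,a-1}$ by $N_{a-1,b-1}$ throughout (and correspondingly adjust the inverse to $\psi(h)=\{(i,j,k):h_{i+1,j+1}\le i+j-2k\}$), and every line of your argument goes through verbatim. The paper's own proof makes the same identification, though it is sloppy about the off-by-one between $S_{i,j}$ (defined for $0\le i\le m-1$) and $h_{i,j}$ (used for $1\le i\le m$).
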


\begin{proof}
Let $\mathcal{O} \in J(\textbf{P}_{m,n})$. For fixed $i$ and $j$ with $0 \leq i \leq m-1$, $0 \leq j \leq n-1$, define the subset $S_{i,j} \defeq \left\{(i,j,t) \, | \, (i,j,t)\in\textbf{P}_{m,n}\right\}$. That is, $S_{i,j}$ is the intersection of the elements of $\textbf{P}_{m,n}$ from the explicit construction in Definition \ref{pyramidposet} with the vertical line $\left\{(i,j,t) : t \in \bbr\right\}$. Construct an $(m+1)\times(n+1)$ matrix $(h_{i,j})$ as follows. First, set $h_{0,k}=k$ for $0 \leq k \leq n$ and $h_{\ell,0}=\ell$ for $0 \leq \ell \leq m$. Then entry $h_{i,j}$ for $1 \leq i \leq m$, $1 \leq j \leq n$ is determined by the cardinality of the intersection between $\mathcal{O}$ and $S_{i,j}$. If this cardinality equals $k$, then, $h_{i,j} = i + j - 2k$. The fact that $\mathcal{O}$ is an order ideal, along with covering relations of $\textbf{P}_{m,n}$ guarantee that adjacent entries in $(h_{i,j})$ differ by exactly one, thus the result is an $(m,n$)-partial height-function matrix. Notice that if the cardinality of the intersection is $k$, then $h_{i,j}$ will be $2k$ less than the maximum possible value of that partial height-function entry.
\end{proof}
\noindent An example of this bijection is given in Figure~\ref{orderidealfig}.

\begin{figure}[ht]
\centering
$\begin{pmatrix}
0 & 1 & 2 & 3 & 4\\
1 & 2 & 3 & 4 & 3\\
2 & 3 & 2 & 3 & 2\\
3 & 4 & 3 & 2 & 3\\
4 & 3 & 4 & 3 & 2
\end{pmatrix}
\longleftrightarrow$\includegraphics[scale=0.6, valign=c]{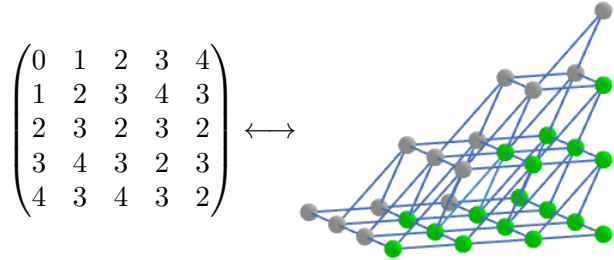}
\caption{A $(4,4)$-partial height-function matrix and its corresponding order ideal (shown in green) in $\textbf{P}_{4,4}$.}
\label{orderidealfig}
\end{figure}

Finally, we relate partial alternating sign matrices to certain sets of nested lattice paths. Here, \emph{lattice path} means a path in the $m \times n$ grid, consisting of south and east steps. When we say \emph{corner}, we mean a south step followed by an east step. This allows us to explicitly count the number of partial alternating sign matrices with total sum $1$, which gives some hope of finding a refined enumeration of partial alternating sign matrices.

\begin{definition}
An \emph{$(m,n)$-nest of osculating lattice paths} is any set of osculating paths (non-crossing paths which can only touch at corners) in the $m \times n$ grid with south and east steps, whose starting points are on the left side, and whose end points are on the bottom of the grid.
\end{definition}

See Figure~\ref{fig:paths} for an example. The next lemma follows from known results of osculating paths (see \cite{Behrend2} for more information and further references) and so we present it without proof. In Figure~\ref{fig:paths}, we give an example which goes through the bijections discussed earlier in this section, and in Figure~\ref{fig:directpaths} we give an example of a more standard and direct way (akin to that of the usual alternating sign matrix setting) to obtain the nest of paths.


\begin{lemma}\label{pasm-olp}
There is an explicit bijection between $m \times n$ partial alternating sign matrices with total sum $t$ and the set of $(m,n)$-nests of osculating lattice paths with $t$ paths.
\end{lemma}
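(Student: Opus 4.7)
The plan is to adapt the classical ASM--to--osculating-paths correspondence (see \cite{Behrend2}) to the partial setting, driving the construction from the partial column-sum encoding already established in Lemma~\ref{pasm-pmt}. Writing $C_{ij} = \sum_{k \le i} M_{kj} \in \{0,1\}$, the nonzero entries in the $i$-th row of the partial monotone triangle list exactly those columns $c$ with $C_{i,c}=1$. Geometrically I want to interpret these as the columns in which a path crosses the horizontal line between rows $i$ and $i+1$ of the $m\times n$ grid, so my map sends $M$ to the unique nest of south-and-east paths having this horizontal profile at every level.

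To produce the path segments within each row I would use the standard local rules at a cell $(i,j)$: if $M_{ij}=+1$ a path turns at this cell (entering from the west, exiting to the south), or begins at the left boundary if this is the leftmost nonzero entry in a row of row sum $1$; if $M_{ij}=-1$ two paths osculate at the corresponding lattice vertex (one entering from the north and exiting east, the other entering from the west and exiting south, sharing the vertex but no edge); if $M_{ij}=0$ the cell is either traversed by a single straight path (south or east) or is empty, the case being forced by the values of $C_{i-1,j}$, $C_{i,j}$ and the analogous right-to-left partial row sums. The defining axioms of a PASM (alternating signs together with row and column sums in $\{0,1\}$ and the ``first nonzero in a column, last nonzero in a row is $+1$'' convention) are precisely what makes these local rules globally consistent and makes every cell fall into exactly one case.

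The path count then falls out: the number of paths exiting the bottom boundary equals the number of columns $j$ with $C_{m,j}=1$, i.e., the number of columns of column sum $1$, which equals the total sum $t$ of $M$; by the symmetric argument for partial row sums, the number of paths entering from the left boundary is the number of rows of row sum $1$, which is also $t$. Non-crossing follows from the strict-increase condition on nonzero entries along triangle rows combined with the diagonal inequalities $a_{i,j}\le a_{i-1,j}$ and $a_{i,j}\le a_{i+1,j+1}$ of Definition~\ref{pmt}: two adjacent paths cannot exchange their relative column order between successive levels. The osculating contacts arise precisely at the $-1$ entries.

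The main obstacle I expect is the local case analysis at each $-1$ entry, verifying that the two path segments meeting there really osculate (a south-then-east corner of one coinciding at a lattice vertex with a west-then-south corner of the other) rather than cross or merge. This is exactly the step that distinguishes osculating nests from ordinary non-intersecting paths, and it is the content of the ``known results of osculating paths'' cited via \cite{Behrend2}. The inverse map is then essentially forced: given a nest of osculating paths, one reads off $C_{ij}$ from the indicator of whether a path occupies the bottom edge of cell $(i,j)$ and recovers $M$ by row-differences as in the proof of Lemma~\ref{pasm-pmt}, so the two maps are mutually inverse.
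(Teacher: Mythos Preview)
Your proposal is correct and is essentially the construction the paper has in mind: the paper presents this lemma \emph{without proof}, deferring to \cite{Behrend2} and illustrating the bijection only through Figures~\ref{fig:paths} and~\ref{fig:directpaths}. Your column-sum/local-rule description matches the ``direct'' construction sketched in the caption of Figure~\ref{fig:directpaths} (note that in a PASM the first nonzero entry of a row is $+1$ exactly when the row sum is $1$, since the last nonzero entry of every row is forced to be $+1$; so your characterisation of which rows spawn a path agrees with the paper's). If anything, your write-up is more detailed than what the paper provides, and the inverse map via reading off $C_{ij}$ from vertical edge occupancy is exactly the right way to close the loop.
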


\begin{figure}[hbtp]
\centering
\scalebox{1.25}{$\left(\begin{array}{rrrr}
1  & 0 & 0  & 0\\
0  & 0 & 1  & 0\\
-1 & 1 & 0  & 0\\
1  & 0 & -1 & 1
\end{array}\right)$}
\hspace{0.5in}\includegraphics[width=1.75in, valign=c]{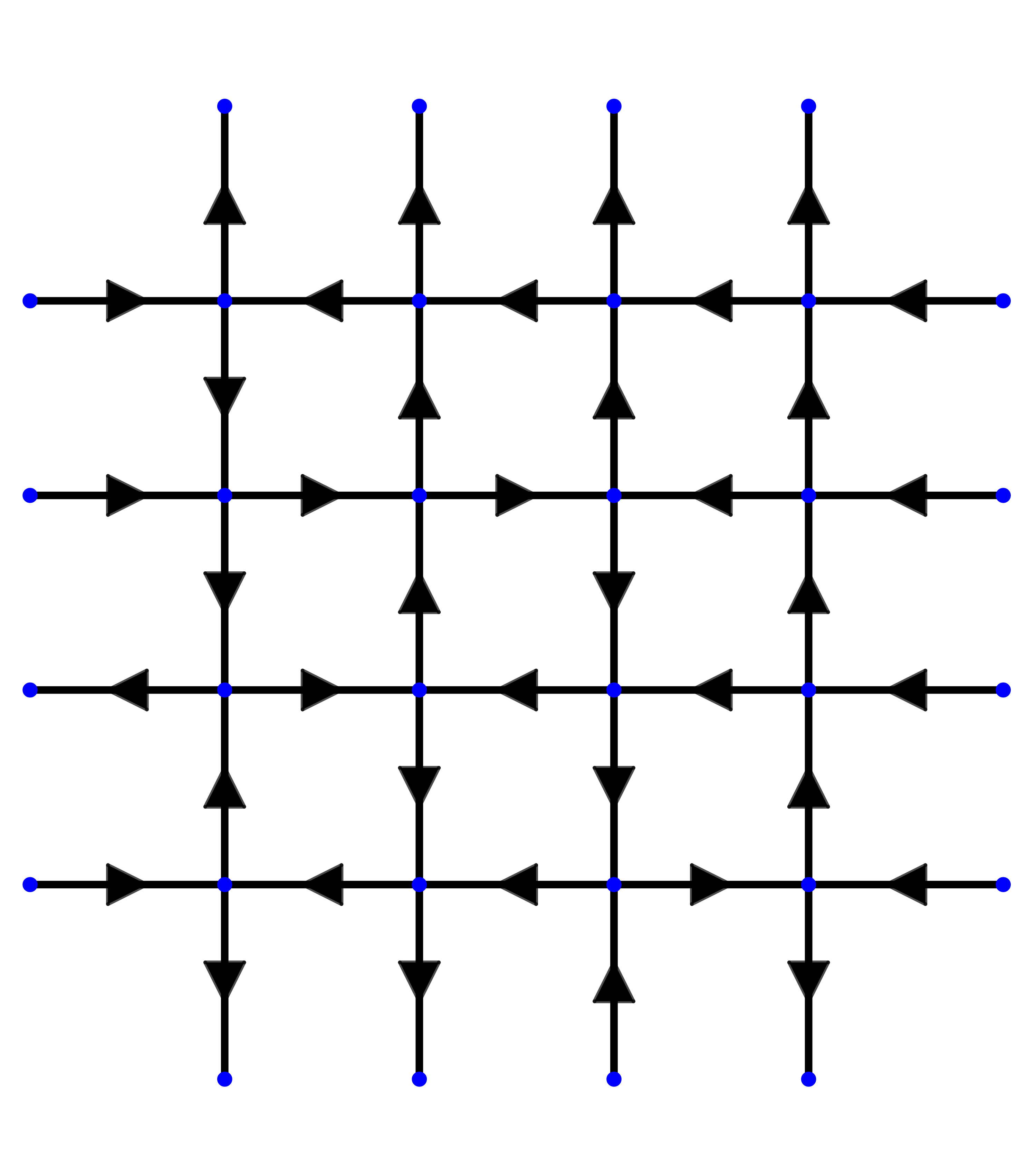}

\includegraphics[width=1.75in]{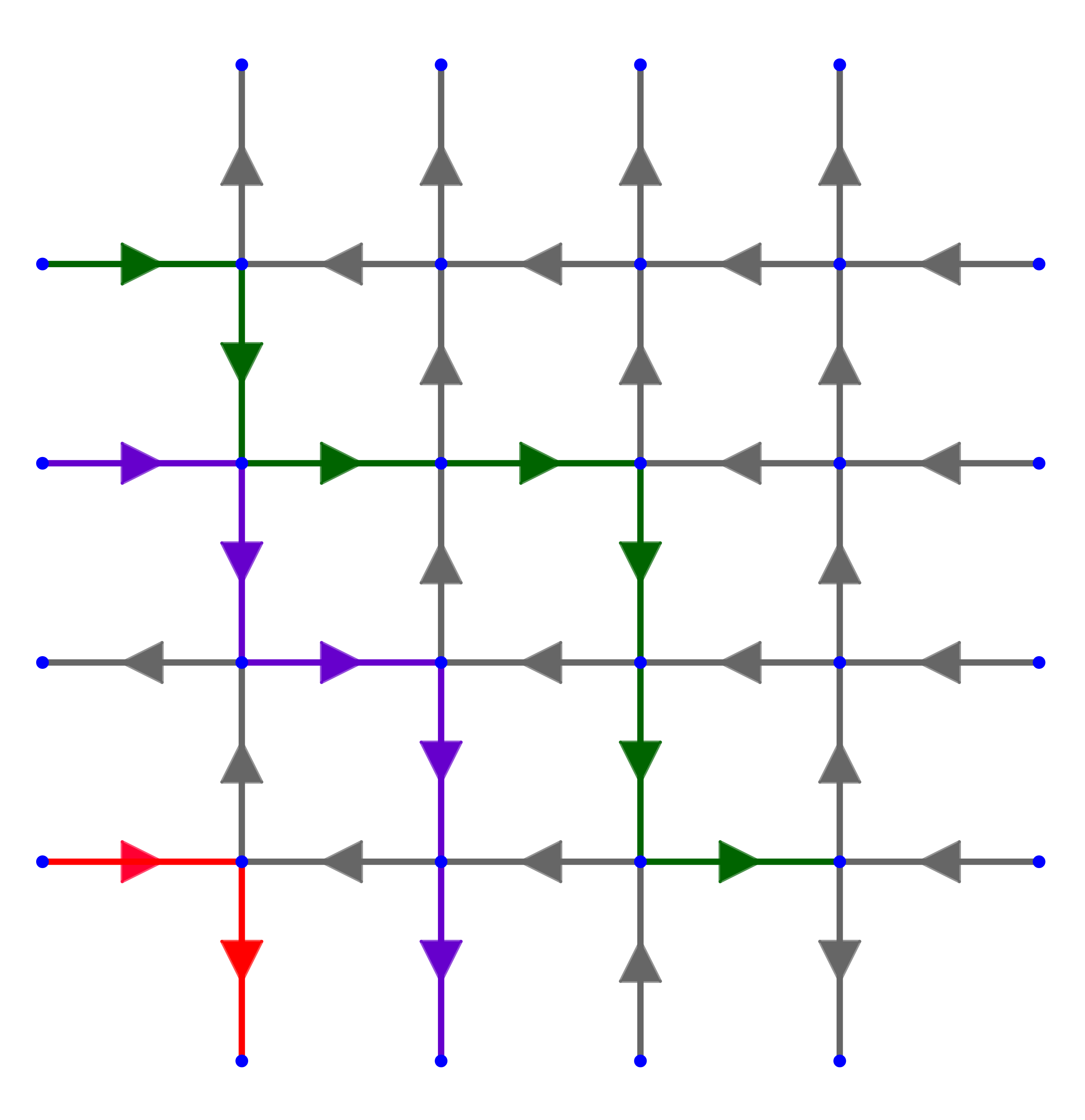} \hspace{0.5in} \includegraphics[width=1.75in]{oscupaths2.png}
\caption{An example of mapping a partial alternating sign matrix to a nest of oscullating paths. Starting with the matrix (top left), we construct a reflection of its rectangular ice configuration (top right). This can be done directly using the correspondence mentioned in Remark~\ref{rem:corresp}, or by applying the bijections in Lemmas~\ref{pasm-phf}, \ref{pfpl-phf}, and \ref{pfpl-ice} in succession, followed by a horizontal reflection. We then create the first path by starting with the topmost inward-facing directed edge along the left, and follow the directed edges to the right when possible and down otherwise. Each path is constructed similarly for each other inward-facing directed edge along the left (bottom left). Note that a new path touching the edge of a previous path forces the path to go down to satisfy the osculating condition. Finally, we delete the first and last edge of each path (bottom right).}
\label{fig:paths}
\end{figure}

\begin{figure}[hbtp]
\centering
\includegraphics[height=1.75in]{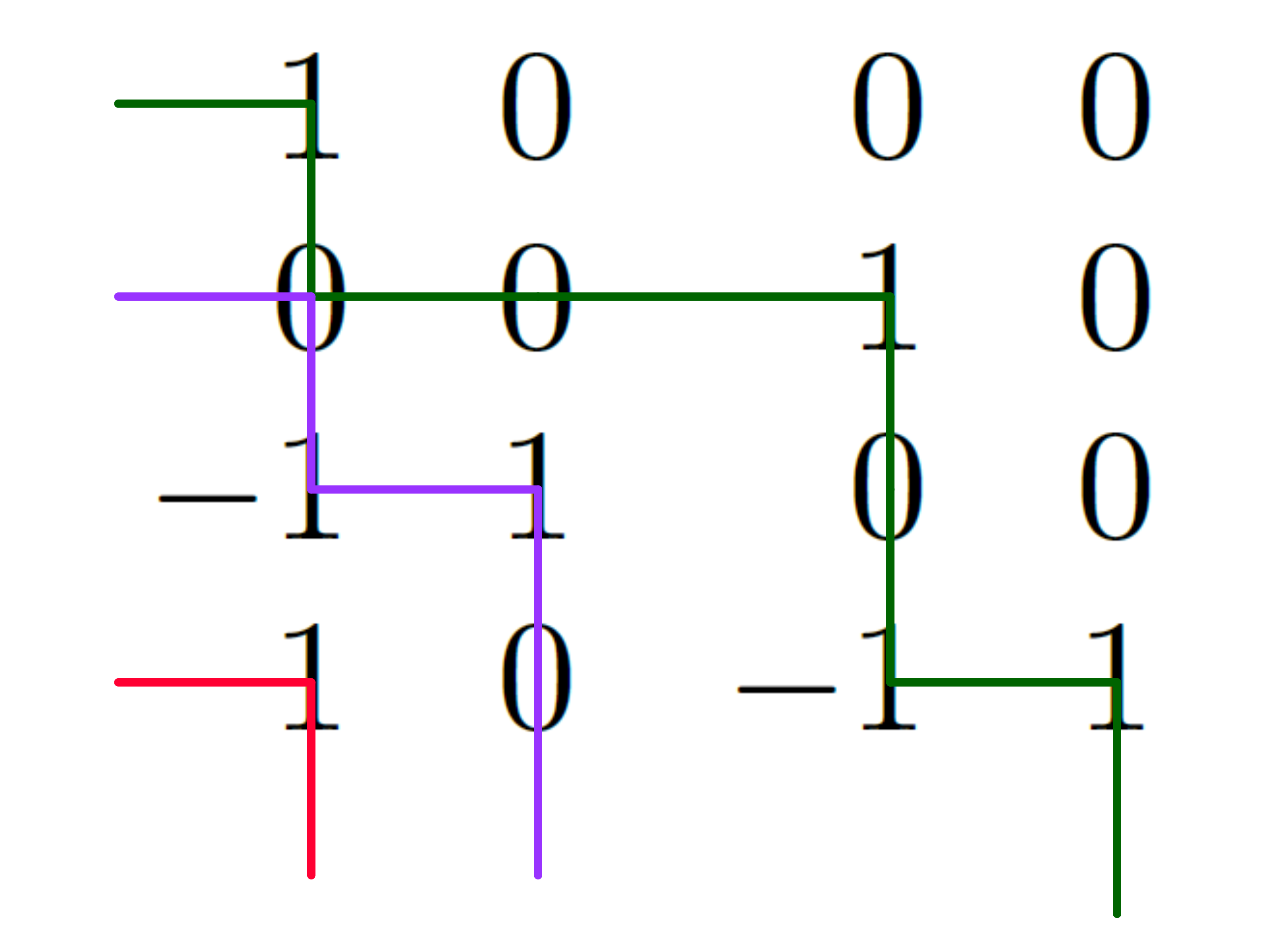} \hspace{0.5in} \includegraphics[width=1.75in]{oscupaths2.png}
\caption{An example of a more direct way to obtain a nest of oscullating lattice paths from a partial alternating sign matrix. Starting outside the matrix on the left and top, if the first nonzero entry seen to the right is a 1, create the path by moving to the right until the 1 or a previous path is reached, at which point the path moves down until the first nonzero entry to the right is a 1 (at which point the path moves right again and continues this process) or the path exits the matrix on the bottom. Each subsequent path is created in the same manner. Note that if the first nonzero entry in a row is not 1, no path will start in that row. Finally, delete the first and last edges in order to match the definition of nests of oscullating lattice paths.}
\label{fig:directpaths}
\end{figure}


This correspondence gives us the following enumeration for partial alternating sum matrices with total sum $1$.

\begin{corollary} The number of $m \times n$ partial alternating sign matrices with sum $1$ is $\binom{m+n}{m}-1$.
\end{corollary}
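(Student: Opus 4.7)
The plan is to specialize Lemma~\ref{pasm-olp} to $t=1$, so that it suffices to count $(m,n)$-nests of osculating lattice paths consisting of a single path. Because the non-crossing/osculating condition is vacuous for a single path, a one-path nest is just a south-and-east lattice path in the $m\times n$ grid beginning at some vertex in the leftmost column and ending at some vertex in the bottommost row.

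Indexing the grid vertices by $(i,j)$ with $1\le i\le m$ and $1\le j\le n$, as in Figure~\ref{fig:directpaths}, I would enumerate these paths by their endpoints. A path starting at $(i_s,1)$ and ending at $(m,j_e)$ requires exactly $m-i_s$ south steps and $j_e-1$ east steps, and so there are $\binom{(m-i_s)+(j_e-1)}{m-i_s}$ such paths for each choice of $(i_s,j_e)$. The total count is therefore
\[
\sum_{i_s=1}^{m}\sum_{j_e=1}^{n}\binom{(m-i_s)+(j_e-1)}{m-i_s}.
\]

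Finally, I would evaluate this double sum by two applications of the hockey stick identity $\sum_{k=r}^{N}\binom{k}{r}=\binom{N+1}{r+1}$: first in $j_e$ (with $m-i_s$ fixed) to collapse the inner sum to $\binom{m-i_s+n}{n-1}$, and then in $i_s$ to obtain $\binom{m+n}{n}-\binom{n-1}{n-1}=\binom{m+n}{m}-1$. The whole argument is essentially bookkeeping; the only point worth checking carefully is matching the grid conventions implicit in Lemma~\ref{pasm-olp}, namely that a sum-$1$ PASM corresponds to a single path in an $m\times n$ array of vertices whose endpoints are constrained to the leftmost column and the bottommost row. A natural alternative, which I would use as a sanity check, is to instead apply Lemma~\ref{pasm-pmt}: sum-$1$ PASMs correspond to $(m,n)$-partial monotone triangles in which there is a unique row $r$ from which the nonzero entries $c_r\le c_{r+1}\le\cdots\le c_m$ (all occupying the rightmost diagonal) form a weakly increasing sequence in $\{1,\dots,n\}$, and a parallel double hockey stick sum yields the same answer $\binom{m+n}{m}-1$.
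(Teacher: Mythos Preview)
Your argument is correct. Both you and the paper begin by invoking Lemma~\ref{pasm-olp} with $t=1$ to reduce to counting single south--east lattice paths in the $m\times n$ grid that start on the left edge and end on the bottom edge. From there the two arguments diverge. The paper gives a bijective proof: any such path extends uniquely---by prepending one west step and then north steps to the upper-left corner, and appending one south step and then east steps to the lower-right corner---to a corner-to-corner lattice path in an $(m+1)\times(n+1)$ grid, and this extension hits every such larger path except the one consisting of all south steps followed by all east steps, whence $\binom{m+n}{m}-1$. Your approach is instead enumerative: you sum $\binom{(m-i_s)+(j_e-1)}{m-i_s}$ over all start/end pairs and collapse the double sum with two applications of the hockey-stick identity. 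The paper's route is shorter and explains the ``$-1$'' combinatorially as a single missing path; yours is more mechanical but entirely self-contained and requires no auxiliary picture. (Your monotone-triangle sanity check is also sound; note, though, that once you have the characterization---all-zero rows $1,\dots,r-1$ followed by a single weakly increasing sequence $c_r\le\cdots\le c_m$ on the rightmost diagonal---the resulting sum over $r$ of $\binom{n+m-r}{n-1}$ needs only one hockey-stick, not two.)
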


\begin{proof}
By Lemma~\ref{pasm-olp}, the set of $m \times n$ partial alternating sign matrices with sum $1$ is in bijection with the set of $(m,n)$-nests of osculating paths with $1$ path. That is, the set of paths in an $m \times n$ grid with starting point on the left and ending point on the bottom of the grid. Each of these paths can be uniquely extended to a lattice path in the $(m+1) \times (n+1)$ grid (where the additional column has been added on the left and additional row on the bottom) which starts in the upper left corner and ends in the lower right corner. To do so, the start of the path gets extended one step to the left, and then up to the upper left corner, and the end of the path gets extended one step down and then right to the lower right corner. This will account for all such paths in the $(m+1) \times (n+1)$ grid, except for the one which is all south steps followed by all east steps. (This path is excluded because no path starting on the left edge of the $m \times n$ grid can have its starting point as the lower left corner of the $(m+1) \times (n+1)$ grid.) Since those paths are counted by $\binom{m+n}{m}$, the total number of $m \times n$ partial alternating sign matrices with sum $1$ is $\binom{m+n}{m}-1$.
\end{proof}

\begin{openproblem}
We leave it as an open question to enumerate $\{M\in \pasm_{m,n} \ | \ \mbox{sum}(M)=t\}$ for $t>1$. Figure~\ref{nt_table} gives the number of $n\times n$ partial alternating sign matrices with total sum $t$, for $n,t\leq 6$ calculated using SageMath. Note the value for $n=6$, $t=2$ is prime, so there will not be a product formula in general, though this does not preclude the existence of a sum formula. It would be interesting to find a formula for the cardinality of the set $\{M\in \pasm_{m,n} \ | \ \mbox{sum}(M)=t\}$, since for $t=m=n$, this is the set of $n\times n$ alternating sign matrices, enumerated by $\displaystyle\prod_{j=0}^{n-1}\displaystyle\frac{(3j+1)!}{(n+j)!}$~\cite{Zeilberger,Kuperberg}.
\end{openproblem}

\begin{figure}[hbtp]
\centering
\begin{tabular}{|c|c|c|c|c|c|c|c|}
\hline
\diaghead(1,-1)%
   {\theadfont nnn}%
   {$n$}{$t$} & 0 & 1 & 2 & 3 & 4 & 5 & 6 \\ \hline
1 & 1 & 1& & & & & \\ \hline
2 & 1 & 5 & 2 & & & & \\ \hline
3 & 1 & 19 & 35 & 7 & & & \\ \hline
4 & 1 & 69 & 425 & 387 & 42 & & \\ \hline
5 & 1 & 251 & 4845 & 13861 & 7007 & 429 & \\ \hline
6 & 1 & 923 & 55897 & 458263 & 709242 & 210912 & 7436 \\ \hline
\end{tabular}
\caption{The number of $n\times n$ partial alternating sign matrices with total sum $t$.}
\label{nt_table}
\end{figure}

\begin{proof}[Proof of Theorem~\ref{thm:bijections}]
Explicit bijections between the listed objects are given in Lemmas \ref{pasm-pmt}, \ref{pasm-phf}, \ref{pfpl-phf}, \ref{pfpl-ice}, \ref{phf-oi}, and \ref{pasm-olp}.
\end{proof}

\section{Partial Alternating Sign Matrix Dynamics}
\label{sec:pasm_dyn}

In this section we explore dynamics related to partial alternating sign matrices, inspired by work of Striker \cite{Striker1}. We first describe a local move on partial fully-packed loop configurations (recall Definition \ref{pfpl}), which leads to the definition of an action called \emph{gyration} on these configurations. This is analogous to Wieland's gyration of fully-packed loops \cite{Wieland}, as discussed in Section~\ref{sec:prelim}. We then show how gyration acts on the corresponding height-function matrices and order ideals. Finally, we use Theorem~\ref{rotate} to prove a rotation-like result on \emph{partial link patterns}.

Given an $(m,n)$-partial fully-packed loop configuration, call a square \emph{even} (resp.\ \emph{odd}) if the vertex in its upper-left corner (or lower-right corner) is even (resp.\ odd). Call a square \emph{interior} if all of its surrounding vertices are interior, and \emph{exterior} otherwise. Additionally, we say a square is a \emph{boundary square} if it is in the first row or column.

Define a \emph{local action} on a square as follows. For an interior square, if its only edges are a pair of parallel lines, swap it to a pair of parallel lines in the other orientation. For an exterior square, we define the local action based on location. If the square is on the far right (and not a corner), we swap only the left edge with the pair of horizontal parallel edges (and vice versa). If the square is on the bottom (and not a corner), we swap only the top edge with the pair of vertical parallel edges (and vice versa). If the square is the bottom-right corner, we swap only the left edge with only the top edge (and vice versa). In all other cases, we do nothing. Note that with this definition, the action will never act on a boundary square. See Figure~\ref{localmoves} for an example of each of the local actions described above.

\begin{definition}
\label{fplgyration}
Define the action \emph{gyration} on an $(m,n)$-partial fully-packed loop configuration by first performing the local action on all even squares, then on all odd squares. We denote the gyration action as $G$.
\end{definition}

An example of this action is given in Figure~\ref{fpl-gyration}. Note that by construction, since this action never changes the total degree of any interior vertex and does not affect the fixed boundary, it always produces another $(m,n)$-partial fully-packed loop configuration.

\begin{figure}[hbtp]
\centering
\begin{multicols}{4}
\includegraphics[width=1.25in]{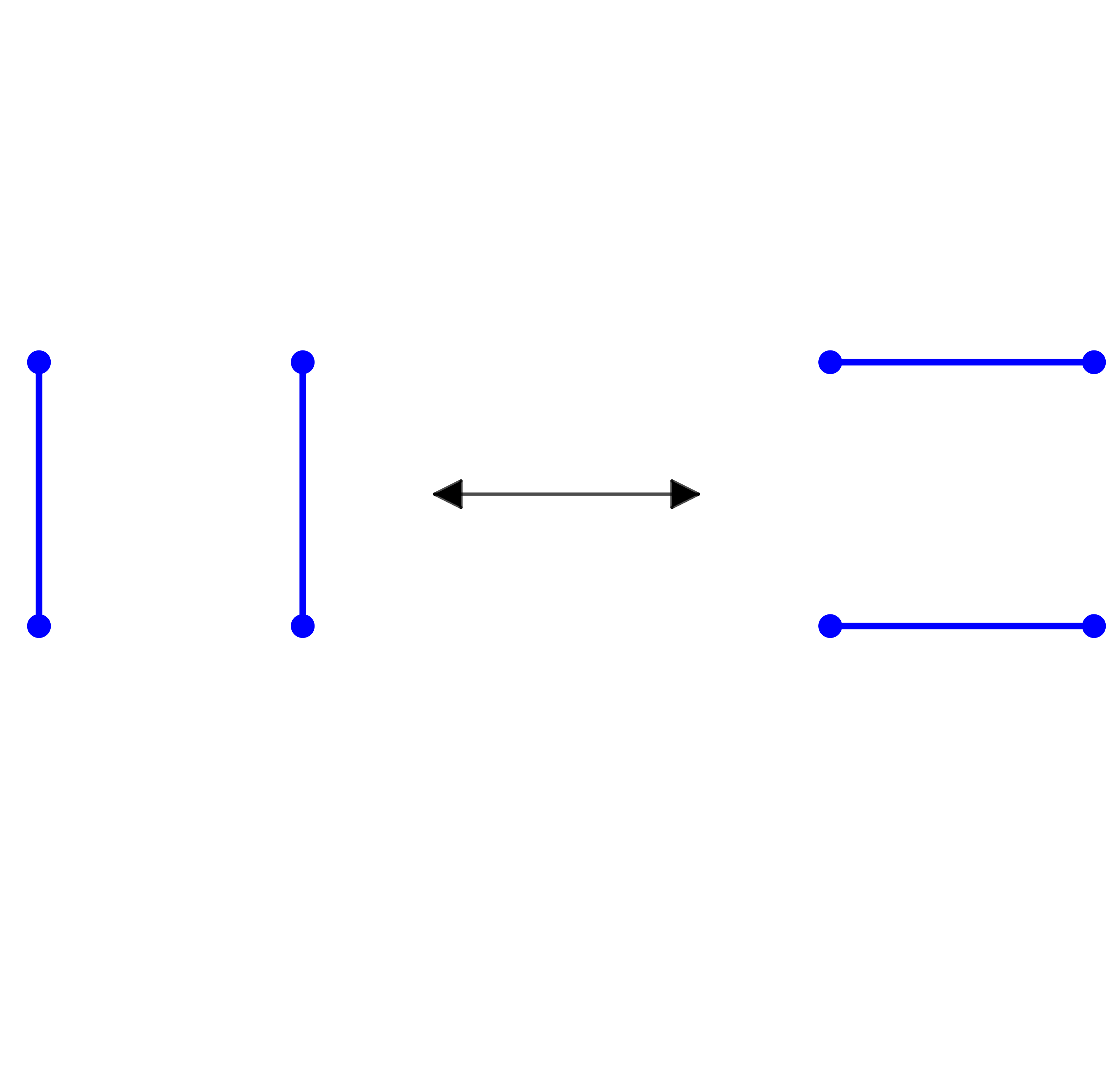}

(interior)

\includegraphics[width=1.25in]{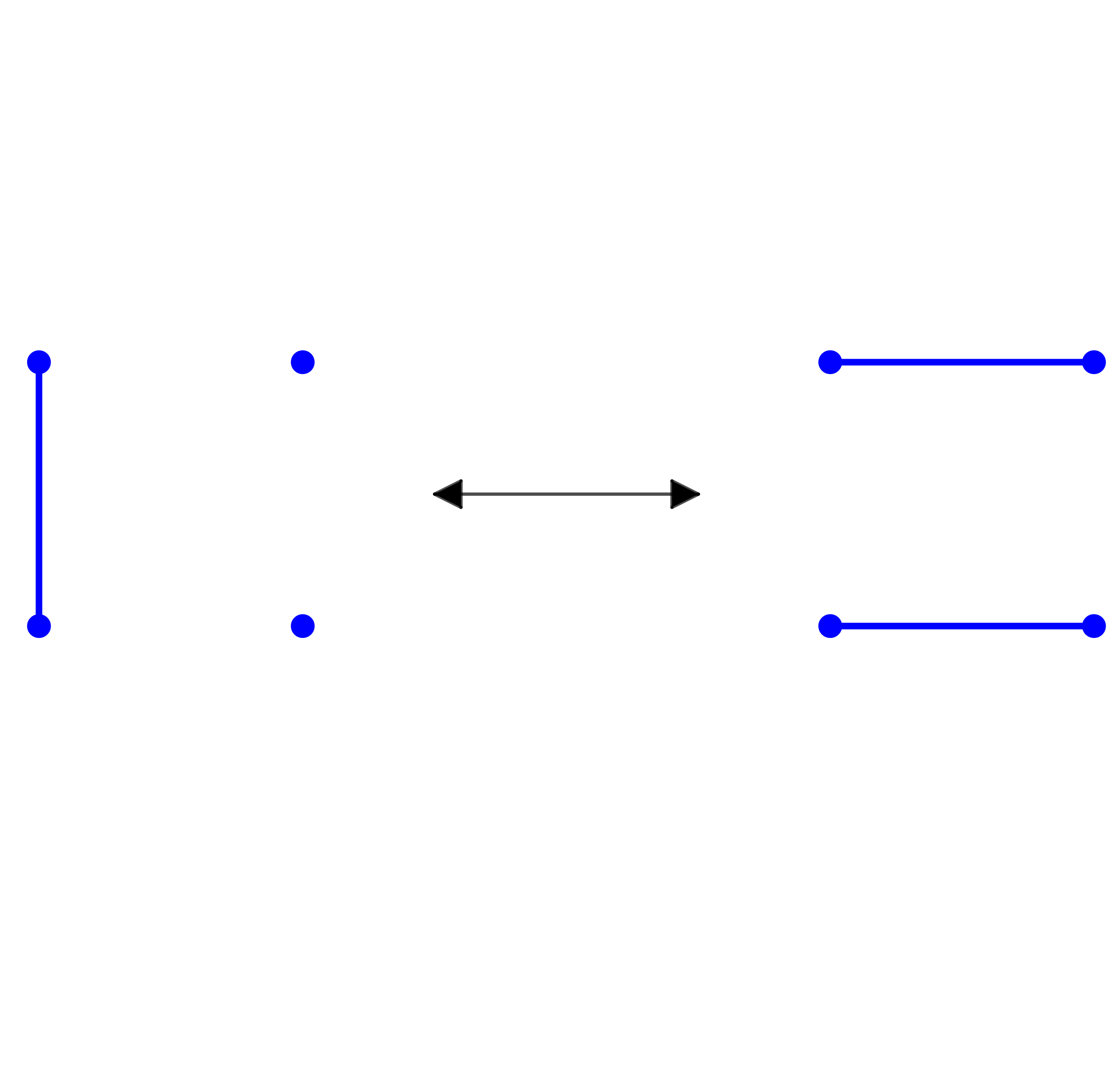}

(right-exterior)

\includegraphics[width=1.25in]{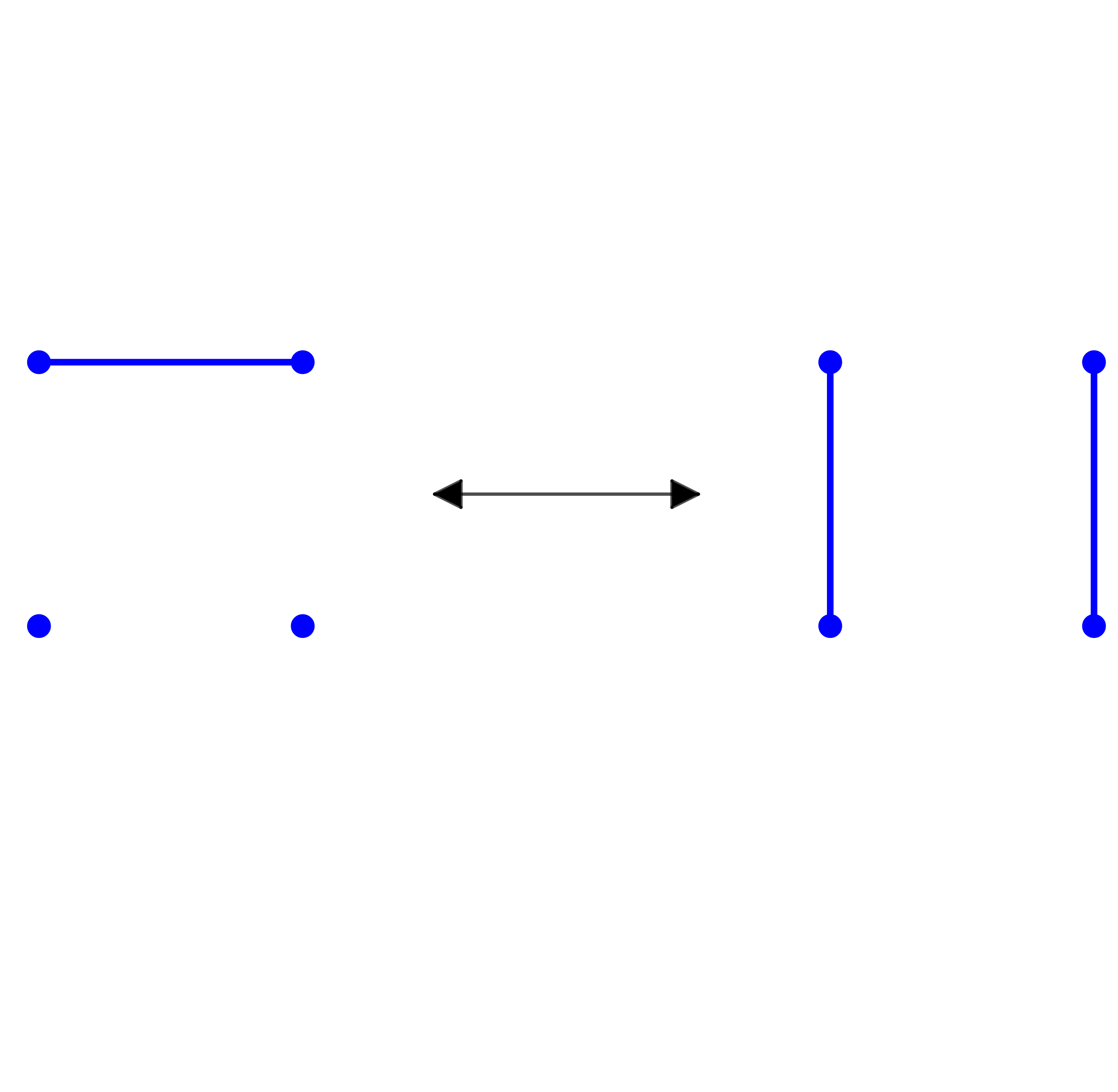}

(bottom-exterior)

\includegraphics[width=1.25in]{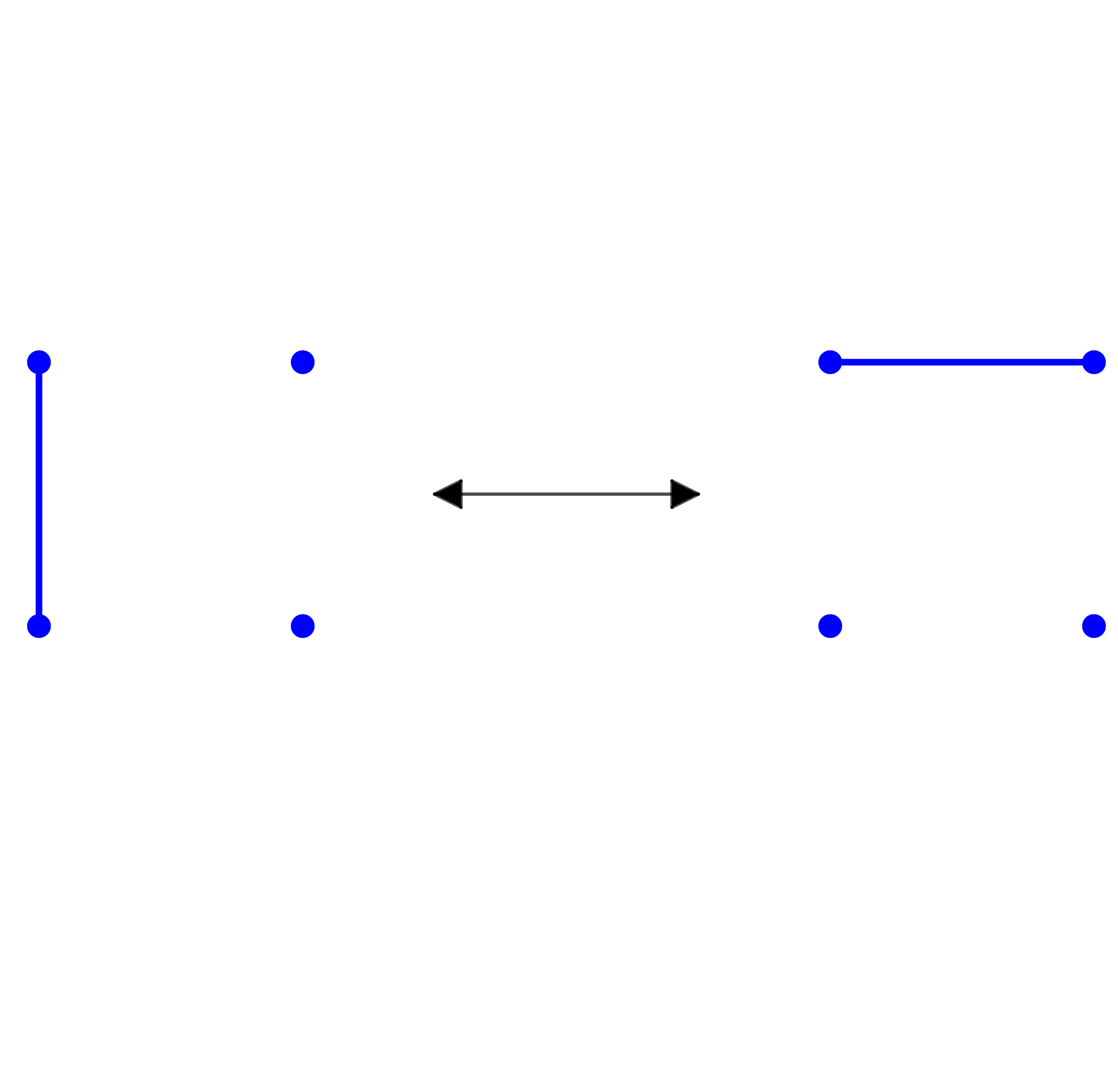}

(bottom-right-exterior)
\end{multicols}

\caption{From left to right: the local moves on interior squares, exterior squares along the right side, exterior squares along the bottom side, and the exterior square in the bottom-right corner.}
\label{localmoves}
\end{figure}

\begin{figure}[hbtp]
\centering
\includegraphics[scale=.18, valign=c]{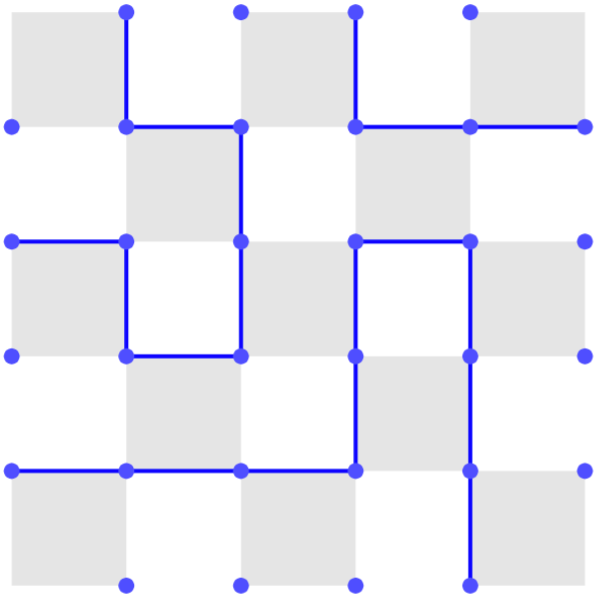} \hspace{0.1cm} $\overset{\mathfrak{g}_e}{\longrightarrow}$ \hspace{0.1cm}
\includegraphics[scale=.18, valign=c]{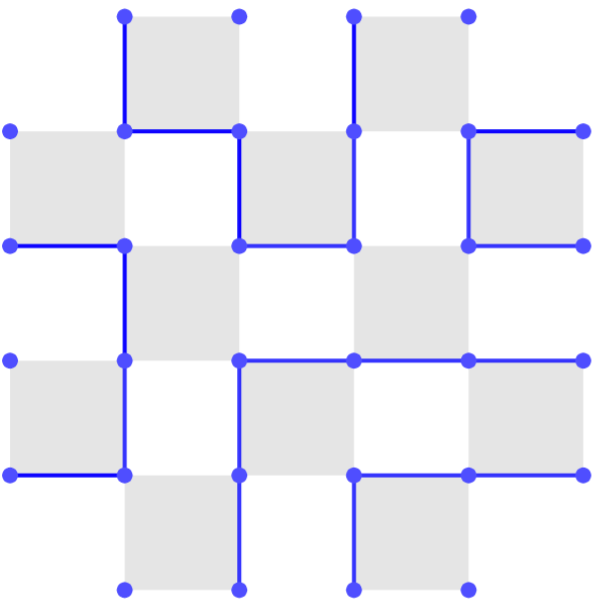} \hspace{0.1cm} $\overset{\mathfrak{g}_o}{\longrightarrow}$ \hspace{0.1cm}
\includegraphics[scale=.18, valign=c]{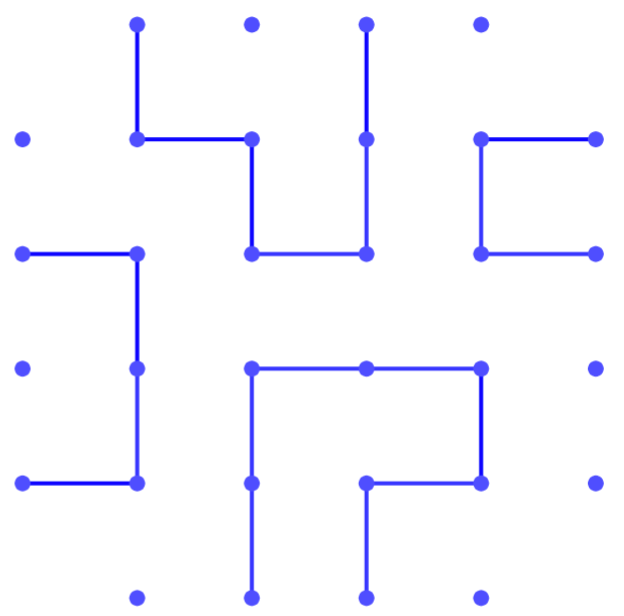}
\caption{The gyration action on a partial fully-packed loop configuration. $\mathfrak{g}_e$ denotes performing the local action on the even squares (shaded in the first diagram), and $\mathfrak{g}_o$ denotes performing the local action on the odd squares (shaded in the second diagram).}
\label{fpl-gyration}
\end{figure}

Given the results of the previous section, we can examine how the local action described above affects the different objects in bijection with partial fully-packed loop configurations. In particular, we can describe exactly what happens on the corresponding partial height-function matrices and order ideals. Recall $S_{i,j}$ from the proof of Lemma~\ref{phf-oi}.

\begin{lemma}\label{equiv}
Let $\left(h_{i,j}\right)$ be an $(m,n)$-partial height-function matrix, $\mathcal{O}\in J\left(\textbf{P}_{m,n}\right)$ the corresponding order ideal, and $F$ the corresponding $(m,n)$-partial fully-packed loop, via the bijections of Lemmas \ref{pfpl-phf} and \ref{phf-oi}. Then the following are equivalent:
\begin{enumerate}
\item[(1)] The local action applied to $F$ at the square in row $i$ and column $j$, where the rows (columns) are numbered from the top (left) starting at 1.
\item[(2)] Incrementing or decrementing the partial height-function matrix entry $h_{i,j}$ by 2, if possible.
\item[(3)] Toggling $S_{i,j}$.
\end{enumerate}
\end{lemma}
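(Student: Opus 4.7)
The plan is to establish (2) $\Leftrightarrow$ (3) algebraically using the cardinality formula from Lemma~\ref{phf-oi}, and then to establish (1) $\Leftrightarrow$ (2) by a parity-based case analysis using the height-to-FPL rule from Lemma~\ref{pfpl-phf}. Chaining gives (1) $\Leftrightarrow$ (2) $\Leftrightarrow$ (3).

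For (2) $\Leftrightarrow$ (3), I would start from the formula $h_{i,j} = i+j-2|\mathcal{O}\cap S_{i,j}|$ implicit in Lemma~\ref{phf-oi}. The first step is to check that $S_{i,j}$ is a chain inside $\textbf{P}_{m,n}$: from the covering relations of Definition~\ref{pyramidposet} one obtains $(i,j,t+1) > (i+1,j,t+1) > (i,j,t)$ (or the analogous chain through $(i,j+1,t+1)$ when $i$ is on the right boundary), so $\mathcal{O}\cap S_{i,j}$ is always a prefix $\{(i,j,0),\ldots,(i,j,k-1)\}$. Hence ``toggling $S_{i,j}$'' is the well-defined operation of either adding $(i,j,k)$ or removing $(i,j,k-1)$, which changes $|\mathcal{O}\cap S_{i,j}|$ by $\pm 1$ and thus $h_{i,j}$ by $\mp 2$. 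The main verification is that the order-ideal toggle conditions translate, under the formula, to the height-adjacency constraints: adding $(i,j,k)$ requires its four covers $(i\pm 1,j,\cdot),(i,j\pm 1,\cdot)$ to lie in $\mathcal{O}$, which a direct substitution shows is equivalent to all four neighbors of $h_{i,j}$ equaling $h_{i,j}-1$, exactly the condition for $h_{i,j}$ to admit a $-2$ change while still differing from every neighbor by $1$. The removal direction is symmetric.

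For (1) $\Leftrightarrow$ (2), the rule in Lemma~\ref{pfpl-phf} makes each edge of $G_{m,n}$ a function of its two incident heights: horizontal edges appear exactly on pairs $\{2k,2k+1\}$ and vertical edges on pairs $\{2k-1,2k\}$. Thus the four edges bounding the square at $(i,j)$ are determined by $h_{i,j}$ and its neighbors. Since a $\pm 2$ change of $h_{i,j}$ preserves its parity but swaps its relation to each neighbor from ``neighbor $=h_{i,j}+1$'' to ``neighbor $=h_{i,j}-1$'' (or vice versa), a short parity case check establishes two things: (i) the square at $(i,j)$ carries a pair of parallel edges iff all four neighbors of $h_{i,j}$ take the same sign relative to $h_{i,j}$; and (ii) the local orientation swap in the interior-square action corresponds uniquely to the valid $\pm 2$ change of $h_{i,j}$. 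The same reasoning handles exterior squares, where $G_{m,n}$ lacks certain boundary edges by Definition~\ref{gridmn}: the right-exterior square at column $n$ has no right edge, so the local action in Figure~\ref{localmoves} becomes a swap between a single left edge and a pair of horizontal parallels, matching the three-neighbor analysis for $h_{i,n}$; the bottom-exterior case is dual; and the bottom-right corner square, with only its top and left edges present, corresponds to a $\pm 2$ change of $h_{m,n}$ constrained by two neighbors.

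The main obstacle will be the parity bookkeeping in the second step, especially verifying that the parallel-lines configuration is equivalent to neighbor agreement and that the swap direction always matches the uniquely permitted $\pm 2$ direction. The boundary subcases are conceptually the same, but one must carefully align each of the four local-action diagrams in Figure~\ref{localmoves} with the reduced set of neighbors forced by the missing edges of $G_{m,n}$. With those verifications complete, combining the two equivalences proves the lemma.
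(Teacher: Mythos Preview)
Your proposal is correct and takes essentially the same approach as the paper: both arguments chain the bijections of Lemmas~\ref{pfpl-phf} and~\ref{phf-oi} to establish $(1)\Leftrightarrow(2)$ and $(2)\Leftrightarrow(3)$, with the key observations being that a $\pm 2$ change in $h_{i,j}$ is possible exactly when all existing neighbors agree, and that this in turn matches both the local FPL move and the toggleability of the relevant element of $S_{i,j}$. Your write-up is in fact more detailed than the paper's (which is a terse ``follow the bijections'' sketch); in particular you explicitly verify that $S_{i,j}$ is a chain and you treat the right, bottom, and corner exterior squares separately, whereas the paper leaves these implicit. One small remark: your chain argument for $S_{i,j}$ via $(i+1,j,t+1)$ or $(i,j+1,t+1)$ misses the extreme corner $i=m-1$, $j=n-1$, where neither intermediate exists, but the relation $(i,j,t+1)>(i-1,j,t)>(i,j,t)$ still works there, so the conclusion stands.
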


\begin{proof}
To see the equivalence between $(1)$ and $(2)$, we follow the bijection in Lemma~\ref{pfpl-phf}. We see that when the local action would change edges on $F$, this corresponds to $h_{i,j}$ being surrounded by the same number (each entry above, below, to the right, and to the left is either $h_{i,j}-1$ or $h_{i,j}+1$). Making the local action change in edges corresponds exactly to changing the value $h_{i,j}$ to the only other possible height-function value: if the surrounding values are all $h_{i,j}-1$, then $h_{i,j}$ gets changed to $h_{i,j}-2$, and if the surrounding values are all $h_{i,j}+1$, then $h_{i,j}$ gets changed to $h_{i,j}+2$. In the case where the local action would do nothing, this corresponds to $h_{i,j}$ having surrounding values which are not all the same.

To see the equivalence between $(2)$ and $(3)$, we follow the bijection in Lemma~\ref{phf-oi}. We see that incrementing $h_{i,j}$ by $2$ (and having it stay a partial height-function matrix) is only possible when removing an element from $S_{i,j}$ (in $\mathcal{O}$) results in another order ideal. Likewise, decrementing $h_{i,j}$ by $2$ is only possible when adding an element of $S_{i,j}$ results in another order ideal. When it is not possible to change $h_{i,j}$ and have the result be a partial height-function matrix, then adding or removing an element from $S_{i,j}$ results in a subset that is not an order ideal.
\end{proof}
\noindent See Figure~\ref{togglegyrex} for an example of this correspondence.

\begin{figure}[hbt]
\centering
\includegraphics[scale=0.35]{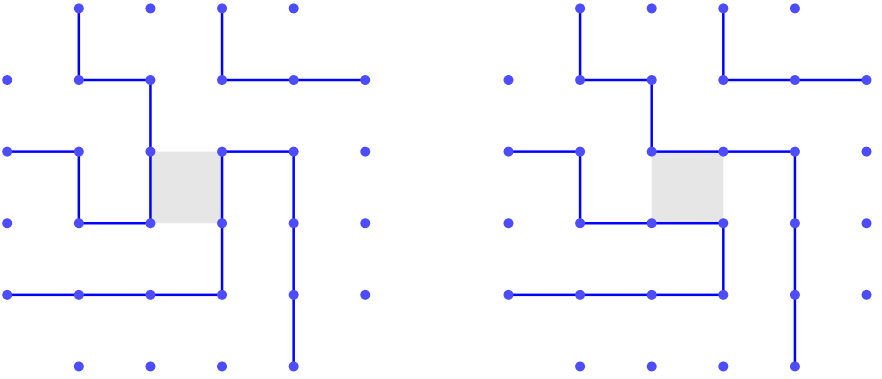}

\vspace{0.25in}

$\begin{pmatrix}
0 & 1 & 2 & 3 & 4\\
1 & 2 & 3 & 4 & 3\\
2 & 3 & \blue{2} & 3 & 2\\
3 & 4 & 3 & 2 & 3\\
4 & 3 & 4 & 3 & 2
\end{pmatrix}$
\hspace{0.5in}
$\begin{pmatrix}
0 & 1 & 2 & 3 & 4\\
1 & 2 & 3 & 4 & 3\\
2 & 3 & \blue{4} & 3 & 2\\
3 & 4 & 3 & 2 & 3\\
4 & 3 & 4 & 3 & 2
\end{pmatrix}$

\vspace{0.25in}

\includegraphics[scale=0.5]{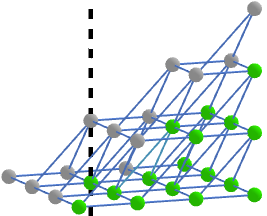} \hspace{0.45in} \includegraphics[scale=0.5]{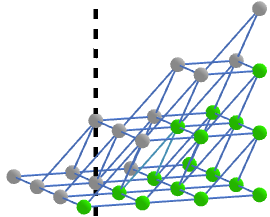}

\caption{Performing the local action on a single square of a partial fully-packed loop (top), the result on corresponding partial height-function matrices (middle), and the result on the corresponding order ideals (bottom). Note that the difference between the two order ideals is that the one on the left has the point (1,1,0) and the one on the right does not.}
\label{togglegyrex}
\end{figure}

We now define a toggle group action and show that this action corresponds to gyration on partial fully-packed loops.

\begin{definition}\label{gyr}
For any finite ranked poset $P$, define $\text{Gyr} : J(P) \rightarrow J(P)$ as the toggle group action which toggles the elements in even ranks first, then odd ranks. Its inverse, $\text{Gyr}^{-1}$, toggles the elements in odd ranks first, then even ranks.
\end{definition}

Note that this this action is well-defined because elements in ranks of the same parity do not have covering relations between them, so the corresponding toggles commute.

\begin{lemma}[Proposition 6.4, \cite{Striker1}]\label{equiv-gyr-row}
For any finite ranked poset $P$, there is an equivariant bijection (denoted by $\phi$) between $J(P)$ under \text{Row} and $J(P)$ under \text{Gyr}, that is, \text{Row} and \text{Gyr} are conjugate elements in the toggle group $T(P)$.
\end{lemma}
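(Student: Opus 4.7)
The plan is to realize both $\text{Row}$ and $\text{Gyr}$ as products of toggles in $T(P)$ in which every element of $P$ appears exactly once, and then invoke the Cameron--Fon-Der-Flaass principle that any two such products are conjugate in $T(P)$.

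First I would recall from \cite{Cameron-Flaass} that $\text{Row}$ can be written in $T(P)$ as the composition of toggles taken in any linear extension of $P^{\text{op}}$, i.e., toggling from top to bottom by rank. The resulting element of $T(P)$ does not depend on the particular linear extension chosen, because elements of the same rank have no cover relations between them and hence their toggles commute. By Definition~\ref{gyr}, $\text{Gyr}$ is already presented as such a ``full'' product: first all even-ranked elements (in any order among themselves), then all odd-ranked elements. Thus both $\text{Row}$ and $\text{Gyr}$ lie in the set $W \subseteq T(P)$ of elements expressible as $t_{p_1}t_{p_2}\cdots t_{p_N}$ for some enumeration $p_1,\ldots,p_N$ of $P$.

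The main step is then to show that all elements of $W$ are conjugate in $T(P)$. For this I would use two basic moves on the defining word: (i) if $p_i$ and $p_{i+1}$ are not related by a cover in $P$, then $t_{p_i}t_{p_{i+1}}=t_{p_{i+1}}t_{p_i}$, so transposing them leaves the element of $W$ unchanged; and (ii) the cyclic shift sending $p_1,p_2,\ldots,p_N$ to $p_2,\ldots,p_N,p_1$ replaces $w=t_{p_1}(t_{p_2}\cdots t_{p_N})$ by $(t_{p_2}\cdots t_{p_N})t_{p_1}=t_{p_1}^{-1}wt_{p_1}$, which is conjugate to $w$ since $t_{p_1}$ is an involution. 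A combinatorial lemma then asserts that any enumeration of $P$ can be transformed into any other via a sequence of (i) and (ii), and hence any two elements of $W$ are conjugate in $T(P)$. Applying this with the enumerations corresponding to $\text{Row}$ and $\text{Gyr}$ produces an element $\phi \in T(P)$ with $\text{Gyr}=\phi \cdot \text{Row} \cdot \phi^{-1}$, which gives the desired equivariant bijection.

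The hard part will be the combinatorial lemma that (i) and (ii) suffice to connect any two enumerations of $P$, which is the key technical content of the Cameron--Fon-Der-Flaass conjugacy theorem. I would prove it by induction on $|P|$: using a cyclic shift (ii) to align the first letter of the two enumerations to a common element $p$, then using commuting transpositions (i) to reduce to the corresponding problem on $P \setminus \{p\}$, with care taken to handle cover relations between $p$ and the remaining elements via further cyclic shifts. Rather than re-deriving this in full, I would cite \cite{Cameron-Flaass} directly for the detailed argument.
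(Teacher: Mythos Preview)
The paper does not prove this lemma at all; it is stated with attribution to \cite{Striker1}, Proposition~6.4, and used as a black box. So there is no ``paper's own proof'' to compare against, and your sketch is substantially more than what appears here.

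Your argument is the standard one and is correct in outline: reduce to rank toggles $T_i=\prod_{\operatorname{rk}(p)=i}t_p$, observe that $T_i$ and $T_j$ commute whenever $|i-j|\geq 2$, and then note that $\text{Row}=T_rT_{r-1}\cdots T_0$ and $\text{Gyr}=(\prod_{i\text{ even}}T_i)(\prod_{i\text{ odd}}T_i)$ are two ``Coxeter-like'' words in the $T_i$, each using every rank once. The two moves you list---commuting adjacent non-cover-related toggles, and cyclically shifting the word (which conjugates by an involution)---are exactly what one uses to show all such words are conjugate, and for the path-graph commutation pattern on the ranks this connectivity is routine (indeed it is the statement that all Coxeter elements of type $A$ are conjugate). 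One small correction on attribution: \cite{Cameron-Flaass} establishes that $\text{Row}$ equals the toggle product over any linear extension of $P^{\mathrm{op}}$, but the conjugacy-via-cyclic-shift argument you outline is not really there; it is made explicit in \cite{ProRow} and \cite{Striker1}. If you want to cite rather than reprove the combinatorial lemma, those are the right references, matching the paper's own citation.
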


In the case where $P = \textbf{P}_{m,n}$, we have the following lemma, which follows from Lemma~\ref{phf-oi} and Lemma~\ref{equiv}.

\begin{lemma}\label{gyr-poset-fpl}
Let $F$ be an $(m,n)$-partial fully pack loop configuration. When $m+n$ is even, $\phi(G(F)) = \text{Gyr}(\phi(F))$. When $m+n$ is odd, $\phi(G(F)) = \text{Gyr}^{-1}(\phi(F))$.
That is, gyration on $(m,n)$-partial fully-packed loop configurations is equivalent to \text{Gyr} acting on $J\left(\textbf{P}_{m,n}\right)$ when $m+n$ is even, and $\text{Gyr}^{-1}$ acting on $J\left(\textbf{P}_{m,n}\right)$ when $m+n$ is odd.
\end{lemma}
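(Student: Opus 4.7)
The plan is to combine Lemma~\ref{equiv} with the rank formula from Proposition~\ref{posetprop}, matching the two-phase structure of gyration on partial fully-packed loops with that of $\text{Gyr}$ on order ideals. Throughout, identify the bijection $\phi$ with the composition of the bijections in Lemmas~\ref{pfpl-phf} and~\ref{phf-oi}, so $\phi$ carries an $(m,n)$-partial fully-packed loop $F$ to its associated $\mathcal{O}\in J(\mathbf{P}_{m,n})$.

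By Lemma~\ref{equiv}, applying the local action at the square in row $i$, column $j$ of $F$ corresponds under $\phi$ to toggling the (at most one) currently toggleable element of $S_{i,j}$. The key observation is a parity match. By Proposition~\ref{posetprop}, the rank of $(i,j,k)$ equals $m+n-2-i-j+2k$, so every element of $S_{i,j}$ has rank of the same fixed parity $(m+n+i+j)\bmod 2$. On the FPL side, the square in row $i$, column $j$ has upper-left vertex $v_{i-1,j-1}$, whose parity is that of $i+j$; thus ``even square'' is equivalent to ``$i+j$ even.''

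When $m+n$ is even, the even squares are exactly those whose column $S_{i,j}$ has elements of even rank, and the odd squares those with odd-rank $S_{i,j}$. Since $G$ performs the local action first on all even squares and then on all odd squares, while $\text{Gyr}$ toggles all even-rank elements first and then odd-rank elements, the two phases of $G$ correspond under $\phi$ to the two phases of $\text{Gyr}$, yielding $\phi(G(F))=\text{Gyr}(\phi(F))$. When $m+n$ is odd, the parity correspondence swaps: even squares match $S_{i,j}$ with odd-rank elements, and odd squares match $S_{i,j}$ with even-rank elements, so the two phases of $G$ correspond to the phases of $\text{Gyr}$ executed in reverse order, which is precisely $\text{Gyr}^{-1}$, giving $\phi(G(F))=\text{Gyr}^{-1}(\phi(F))$.

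The commutativity required within each phase is immediate: elements of the same rank parity never participate in a covering relation, so by Definition~\ref{def:toggle} the toggles within a single phase commute and the two-phase descriptions are well-defined. The main subtlety I expect to address is verifying that Lemma~\ref{equiv} applies uniformly to the interior and to the right-exterior, bottom-exterior, and bottom-right-corner squares of Definition~\ref{fplgyration}; I would handle this by tracing the proof of Lemma~\ref{equiv} and observing that its characterization ``$h_{i,j}$ can be incremented or decremented by $2$ iff its available neighbors agree'' degenerates cleanly at the right and bottom boundaries of the partial height-function matrix, in agreement with the piecewise definition of the exterior local moves.
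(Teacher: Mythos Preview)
Your proposal is correct and follows essentially the same approach as the paper's proof: both arguments invoke Lemma~\ref{equiv} and then match the parity of a square at position $(i,j)$ with the rank parity of the elements in $S_{i,j}$, splitting into the cases $m+n$ even and $m+n$ odd. Your version is somewhat more explicit---you derive the parity match from the rank formula in Proposition~\ref{posetprop} rather than just asserting it, and you flag the boundary-square subtlety that the paper's proof leaves implicit---but the logical skeleton is identical.
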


\begin{proof}
Notice that the parity of a square in an $(m,n)$-partial fully-packed loop configuration corresponds to the parity of rank in $\textbf{P}_{m,n}$. When $m+n$ is even, even squares correspond to even ranks, and when $m+n$ is odd, even squares correspond to odd ranks. Thus, when we perform gyration on an $(m,n)$-partial fully-packed loop (which applies the local move on all of the even squares and then all of the odd squares), by Lemma~\ref{equiv}, this corresponds to toggling even ranks followed by odd ranks when $m+n$ is even, which is exactly Gyr from Definition~\ref{gyr}. When $m+n$ is odd, performing gyration on an $(m,n)$-partial fully-packed loop corresponds to toggling odd ranks followed by even ranks, which is $\text{Gyr}^{-1}$.
\end{proof}

The previous two propositions give the following theorem, which is an analogue of Theorem~8.13 in \cite{ProRow}.

\begin{theorem}
\label{thm:equivbij}
$J\left(\textbf{P}_{m,n}\right)$ under \text{Row} and $(m,n)$-partial fully-packed loops under gyration are in equivariant bijection.
\end{theorem}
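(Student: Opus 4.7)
The plan is to compose the two equivariant bijections already established in Lemmas~\ref{equiv-gyr-row} and~\ref{gyr-poset-fpl}. Lemma~\ref{equiv-gyr-row} furnishes a bijection $\phi : J(\mathbf{P}_{m,n}) \to J(\mathbf{P}_{m,n})$ with $\phi \circ \mathrm{Row} = \mathrm{Gyr} \circ \phi$, while Lemma~\ref{phf-oi} together with Lemma~\ref{pfpl-phf} provides a bijection $\beta$ from $(m,n)$-partial fully-packed loop configurations to $J(\mathbf{P}_{m,n})$. Lemma~\ref{gyr-poset-fpl} then says that $\beta \circ G = \mathrm{Gyr} \circ \beta$ when $m+n$ is even, and $\beta \circ G = \mathrm{Gyr}^{-1} \circ \beta$ when $m+n$ is odd. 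The whole theorem will drop out from the single composition $\psi \defeq \phi^{-1} \circ \beta$, once the parity is handled.

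In the even case, I would simply compute
\[
\psi \circ G = \phi^{-1} \circ \beta \circ G = \phi^{-1} \circ \mathrm{Gyr} \circ \beta = \mathrm{Row} \circ \phi^{-1} \circ \beta = \mathrm{Row} \circ \psi,
\]
using only the rearrangement $\phi^{-1} \circ \mathrm{Gyr} = \mathrm{Row} \circ \phi^{-1}$ of Lemma~\ref{equiv-gyr-row}. This directly exhibits $\psi$ as the desired equivariant bijection between $G$ acting on $(m,n)$-partial fully-packed loop configurations and $\mathrm{Row}$ acting on $J(\mathbf{P}_{m,n})$.

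In the odd case, the same $\psi$ instead yields $\psi \circ G = \mathrm{Row}^{-1} \circ \psi$, by inverting the intertwining relation before composing. Since a permutation of a finite set has the same cycle structure as its inverse, this already proves that $G$ and $\mathrm{Row}$ have identical orbit structures, which is the sense of ``equivariant bijection'' used throughout the paper (cf.\ the abstract and introduction, where the phrase is glossed as ``the same orbit structure''). If a strict intertwiner is required, I would postcompose $\psi$ with any orbit-reversing bijection $\tau : J(\mathbf{P}_{m,n}) \to J(\mathbf{P}_{m,n})$ satisfying $\tau \circ \mathrm{Row}^{-1} = \mathrm{Row} \circ \tau$, constructed by reversing each $\mathrm{Row}$-cycle independently.

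The only real obstacle here is the parity mismatch in the odd case, and it is resolved by the elementary observation that a permutation is always conjugate to its inverse on a finite set. All of the genuine combinatorial content has been pushed into Lemmas~\ref{equiv}, \ref{equiv-gyr-row}, and~\ref{gyr-poset-fpl}, so the proof of the theorem itself is essentially a diagram chase.
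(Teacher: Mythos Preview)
Your proposal is correct and follows essentially the same approach as the paper: the paper's proof is the single sentence ``The previous two propositions give the following theorem,'' meaning Lemmas~\ref{equiv-gyr-row} and~\ref{gyr-poset-fpl} are composed exactly as you describe. You are simply more explicit than the paper about the diagram chase and about handling the odd-parity case (where $G$ corresponds to $\mathrm{Gyr}^{-1}$ rather than $\mathrm{Gyr}$); the paper glosses over this, but your remedy---that a permutation is conjugate to its inverse, so the orbit structures still agree---is the appropriate fix and matches the paper's stated interpretation of ``equivariant bijection'' as ``same orbit structure.''
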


Recall Theorem~\ref{rotate}, which says that in the usual $n \times n$ alternating sign matrix setting, gyration acting on a fully-packed loops rotates the link pattern. We close this section with a result on partial fully-packed loops that follows from this.

\begin{definition}
Given an $(m,n)$-partial fully-packed loop configuration, $F$, label the places where the paths exit the graph along the left and top (the fixed boundary conditions) with the numbers $\left\{1,\ldots, \lfloor\frac{m}{2}\rfloor+\lceil\frac{n}{2}\rceil\right\}$, starting with $1$ in the lower left. Arrange these numbers in a circular arc, and connect any numbers that are connected by a path in $F$ with an arc. Call this the \emph{partial link pattern} for $F$.
\end{definition}
See Figure~\ref{linkpattern} for examples of partial link patterns.

\begin{corollary}
Gyration on $(m,n)$-partial fully-packed loop configurations exhibits a partial rotation on the corresponding partial link patterns. Specifically, if $i$ and $j$ are connected in a partial link pattern, then in the corresponding partial link pattern after gyration is applied, $i-1$ and $j-1$ will either be connected to each other or not at all.
\end{corollary}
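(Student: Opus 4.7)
The strategy is to reduce the corollary to Wieland's theorem (Theorem~\ref{rotate}) by embedding the partial fully-packed loop into a standard square fully-packed loop in a gyration-compatible way. Given an $(m,n)$-partial fully-packed loop configuration $F$, the first step is to construct a canonical completion $\tilde F$ to an $N \times N$ fully-packed loop (for a suitable $N$, for instance $N=m+n$), by prescribing a fixed configuration on rows and columns appended to the right and below $F$. The completion must satisfy two key properties: (i) the fixed boundary edges of $F$ along the top and left form a portion of the fixed boundary of $\tilde F$; and (ii) the special local moves on right-exterior, bottom-exterior, and bottom-right-exterior squares of $F$ (Figure~\ref{localmoves}) coincide with Wieland's standard interior local move applied to the corresponding faces of $\tilde F$.

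Assuming such an embedding, gyration on $F$ is realized as (a restriction of) Wieland's gyration on $\tilde F$, and the labels $\{1, \ldots, \lfloor m/2 \rfloor + \lceil n/2 \rceil\}$ used for the partial link pattern of $F$ embed as a consecutive block inside the cyclic labeling $\{1, \ldots, 2N\}$ of the link pattern of $\tilde F$. Theorem~\ref{rotate} then says that if $i$ and $j$ are connected in the link pattern of $\tilde F$, then $i-1$ and $j-1$ (mod $2N$) are connected in the link pattern of $G(\tilde F)$. Translating back: if $i$ and $j$ are connected in the partial link pattern of $F$, then after gyration both labels shift down by one cyclically; either both remain inside the partial block, in which case $i-1$ and $j-1$ are connected in the partial link pattern of $G(F)$, or at least one shifts outside the block, in which case the corresponding path now exits through the completion region and is therefore invisible in the partial link pattern, so that $i-1$ and $j-1$ are not connected there. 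This is exactly the dichotomy in the statement.

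The main obstacle is property (ii) of the embedding: verifying that the specialized local moves on exterior squares of $F$ correspond exactly to Wieland's uniform interior move in $\tilde F$. The exterior moves in Figure~\ref{localmoves} appear to have been designed precisely to allow this reduction, so the verification should amount to a careful but tractable finite case analysis at each type of exterior square (right-edge, bottom-edge, and bottom-right corner), checking that the edges prescribed in the appended completion region $\tilde F \setminus F$ cause the corresponding Wieland interior move to reproduce the stated partial action. Once this is handled, the rest of the argument is essentially bookkeeping with the cyclic label shift.
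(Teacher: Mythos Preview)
Your proposal is correct and follows essentially the same approach as the paper: embed the partial configuration as the upper-left corner of an ordinary fully-packed loop and invoke Theorem~\ref{rotate}. The paper's proof is terser---it simply asserts that a partial alternating sign matrix can be completed to a full one so that gyration on the completion restricts to partial gyration on the corner---and does not spell out the verification of your compatibility property (ii), which you rightly flag as the one point needing care.
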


\begin{proof}
This follows from Theorem~\ref{rotate}. Any $(m,n)$-partial alternating sign matrix can be interpreted as the upper-left corner of a larger alternating sign matrix; for any rows or columns that have a sum of $0$, we can systematically add zeros and ones to the left and below until we have a larger matrix with each row and column summing to $1$. So the corresponding $(m,n)$-partial fully-packed loop is really the upper-left corner of a larger fully-packed loop. Let $F_1$ be a partial fully-packed loop and $F_2$ be the result of applying gyration on $F_1$. Let $F_1'$, be a larger fully-packed loop for which $F_1$ is the corner, and $F_2'$ be the result after gyration is applied to $F_1'$ so that $F_2$ is the corner of $F_2'$. Let $i$ and $j$ be connected in the partial link pattern for $F_1$. Then they will also be connected in the link pattern for $F_1'$. When gyration is applied on $F_1'$, the link pattern is rotated, $i-1$ and $j-1$ will be connected in the link pattern for $F_2'$. If the path connecting them stays entirely within the corner that is $F_2$, then $i-1$ and $j-1$ will be connected in the partial link pattern for $F_2$. However, the path connecting them in $F_2'$ may leave that corner, in which case they will not be connected to anything in the partial link pattern for $F_2$.
\end{proof}
\begin{figure}[hbtp]
\centering
\includegraphics[scale=0.7]{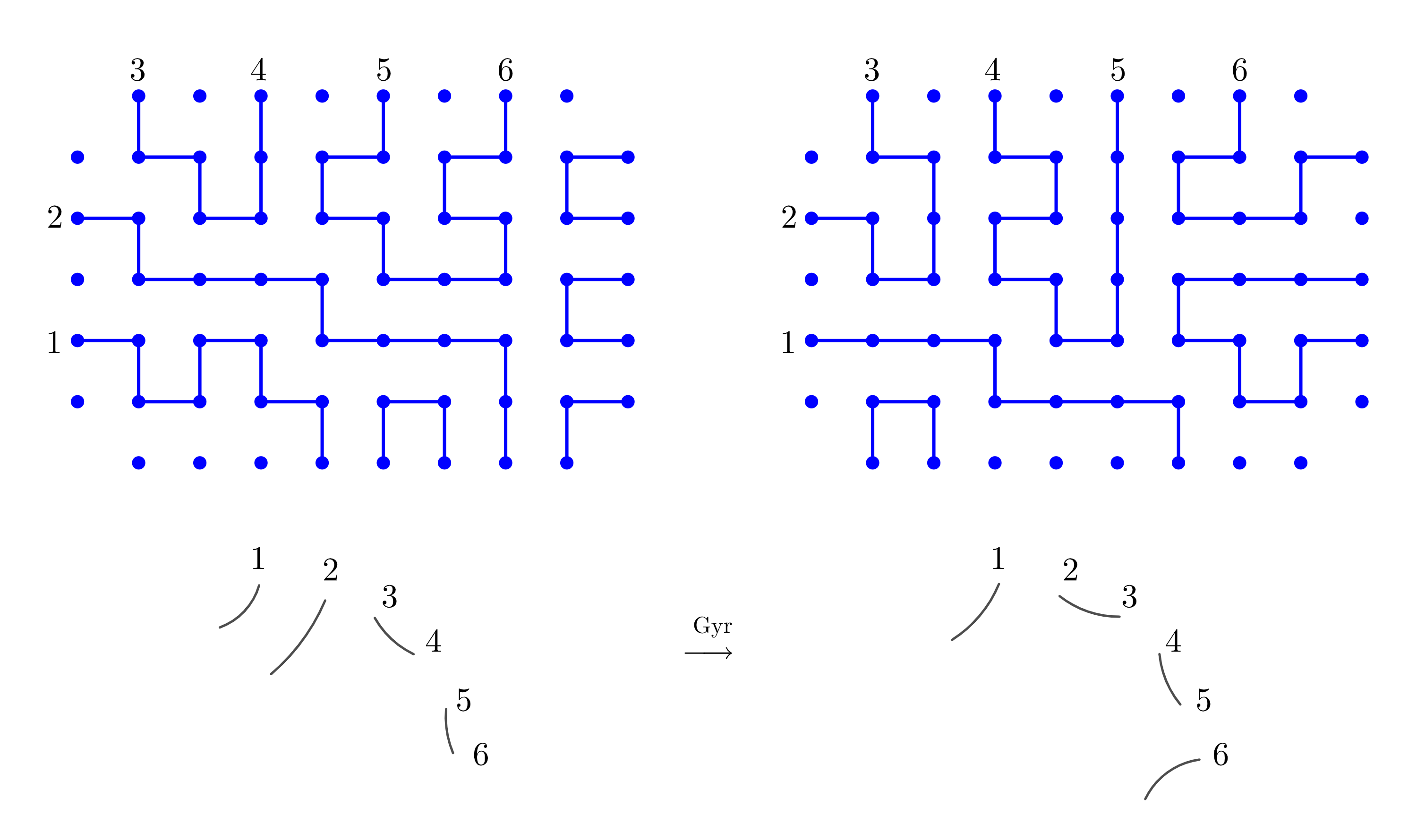}
\caption{A $(5,7)$-partial fully-packed loop configuration along with its partial link pattern (left), and the result of each after gyration is applied (right). For any numbers that do not have a connection, we choose to draw a line that terminates so that it does not cross any other lines or connect to any other numbers, to more closely mimic the feeling of being a piece of a larger link pattern. Note that these lines also ``rotate'' since they will occupy the positions not used by the connected ones.}
\label{linkpattern}
\end{figure}

\begin{remark}
Note that rowmotion orbit sizes for $\text{P}_{m,n}$ do not appear to be nice. For example, when $m=3$ and $n=5$, there are $27$ orbits of size $8$, $2$ orbits each of sizes $2$ and $9$, and $1$ orbit each of sizes $14$, $18$, $36$, $46$, $52$, and $58$, which gives the order of the map to be $4,370,184$ in this case.
\end{remark}

\section{Acknowledgments}
This work is based on research which is a part of the author's doctoral dissertation at North Dakota State University under the supervision of Dr.~Jessica Striker. The author thanks the anonymous referee for their many helpful comments which improved the quality and clarity of this paper, as well as the developers of SageMath \cite{sage}, which made many of the calculations possible.

\end{document}